\documentclass[12pt,a4paper]{article}

\usepackage{amsmath}    
\usepackage{amssymb}
\usepackage{graphicx}   
\usepackage{verbatim}   
\usepackage{color}      
\usepackage{hyperref}   
\usepackage{enumerate}
\usepackage{listings}
\usepackage{xcolor}
\usepackage{slashed}
\usepackage{amsthm}
\usepackage{amsmath}    
\usepackage{amssymb}
\usepackage{mathrsfs}
\usepackage{bbold}

\usepackage[final]{showlabels}

\usepackage{cleveref}
\crefformat{footnote}{#2\footnotemark[#1]#3}

\newcommand{\del}{\partial}
\renewcommand{\theta}{\vartheta}
\renewcommand{\phi}{\varphi}

\newcommand{\dd}{\mathrm{d}}

\renewcommand{\and}{\wedge}

\newcommand{\sgn}{\mathrm{sign\,}}

\definecolor{mygreen}{rgb}{0,0.6,0}
\definecolor{mygray}{rgb}{0.5,0.5,0.5}
\definecolor{mymauve}{rgb}{0.58,0,0.82}

\newtheorem{lemma}{Lemma}
\newtheorem{theorem}{Theorem}

\newtheorem{example}{Example}
\newtheorem{definition}{Definition}

\renewcommand{\title}{A TVD Active Flux method for scalar conservation laws}

\newcommand{\authorOne}{Wasilij Barsukow
\footnote{CNRS, Institut de Mathématiques de Bordeaux (IMB), UMR 5251, 351 Cours de la Libération, 33405 Talence, France, \url{wasilij.barsukow@math.u-bordeaux.fr}, \emph{Present address:} Imperial College London, IRL2004, Huxley Building, South Kensington Campus, London, United Kindgom}
}

\begin{document}

\begin{center} \Large
\title

\vspace{1cm}

\date{}
\normalsize

\authorOne
\end{center}

\begin{abstract}

Focusing on scalar conservation laws, a TVD theory is developed for the fully discrete, single-stage Active Flux method. The classical choice of degrees of freedom is considered, yielding a third-order method. Starting from an adaptation of the concept of (discrete) total variation to Active Flux, a corresponding adaptation of Harten's lemma is achieved. Finally, the high-order method is endowed with parameters chosen such that they can be efficiently computed to fulfill the conditions of the lemma.

Keywords: Total-Variation-Diminishing, Harten's lemma, Active Flux, limiter

Mathematics Subject Classification (2010): 65M08, 65M60

\end{abstract}

\begin{center}
{\footnotesize Dedicated to the memory of Philip L. Roe (1938--2026).}
\end{center}

\newcommand{\cfl}{\nu}

\section{Introduction}

Active Flux for scalar conservation laws
\begin{align}
\del_t q + \del_x f(q) &= 0 \qquad q \colon \mathbb R^+_0 \times \mathbb R \to \mathbb R, f \colon \mathbb R \to \mathbb R \label{eq:conslaw}
\end{align}
has been introduced in \cite{vanleer77,eymann13}.
Its high order of accuracy is achieved by incorporating and evolving several degrees of freedom per computational cell $C_i := [x_{i-\frac12},x_{i+\frac12}]$ (with $\Delta x := x_{i+\frac12}  - x_{i-\frac12}$ constant for simplicity). The classical choice is third order of accuracy with point values $q_{i+\frac12}^n \simeq q(t^n, x_{i+\frac12})$ at cell interfaces and cell averages $\bar q_i^n \simeq \frac{1}{\Delta x} \int_{x_{i-\frac12}}^{x_{i+\frac12}} q(t^n, x) \, \dd x$. The former are shared, such that in each cell an interpolating parabola $q^n_{\text{recon},i} \colon [-\frac{\Delta x}{2}, \frac{\Delta x}{2}] \to \mathbb R$ can be constructed from the knowledge of the average and the two point values at its endpoints:
\begin{align}
 q^n_{\text{recon},i}(x) &= -3 (2 \bar q_i^n - q_{i-\frac12}^n - q_{i+\frac12}^n) \frac{x^2}{\Delta x^2} + (q_{i+\frac12}^n - q_{i-\frac12}^n) \frac{x}{\Delta x} + \frac{6 \bar q_i^n - q_{i-\frac12}^n - q_{i+\frac12}^n}{4}  \label{eq:parabolicrecon}
\end{align}

This parabola (usually called \emph{reconstruction}) joins continuously with the parabolas in the neighbouring cells giving rise to a globally continuous approximation/reconstruction
\begin{align}
q^n_{\text{recon}}(x) =  q^n_{\text{recon},i}(x - x_i) \quad \text{if } x \in C_i 
\end{align}
where $x_i = \frac12(x_{i+\frac12} + x_{i-\frac12})$ (see Figure \ref{fig:reconstructions}, center).
Thus, the approximation space of classical Active Flux is essentially that of standard $P^2$ Finite Elements. Extensions to higher polynomial degrees have been considered in \cite{abgrall22,abgrall22proceeding}, but here the classical choice is adopted.

The way the degrees of freedom of Active Flux are evolved in time is different from Finite Element methods. With details postponed to Section \ref{sec:afreview}, here it is enough to mention that Active Flux is a single-stage method. The point values are evolved in time first, and their old ($q_{i+\frac12}^n$), new ($q_{i+\frac12}^{n+1}$) and possibly intermediate values are used as quadrature points in time to approximate the time-average of the flux over the time step. This flux is then used to update the cell average like in Finite Volume methods. Since the data are continuous, Riemann solvers are not necessary (they would all simply give the physical flux by consistency, anyway, since there are no jumps in the reconstruction).  The classical choice is that of Simpson's rule which requires a point value evaluated at half the time step. This intermediate point evaluation $q_{i+\frac12}^{n+\frac12}$ will become pivotal later for achieving the TVD property.

Active Flux (being high-order and based on parabolic reconstructions) cannot be expected to be free of spurious oscillations. Provable properties for Active Flux have so far been achieved for bound preservation in \cite{chudzik23,duan24,abgrall25bound,kiechle25,barsukow25afos}, e.g. in order to guarantee positivity of the solution. This approach can in principle also be used to avoid spurious oscillations for scalar conservation laws since the latter enjoy a maximum principle, if one is able to obtain accurate \emph{local} bounds. However, these approaches give up continuity and use a Riemann solver as a fallback low-order method, which does not comply with the defining principles of Active Flux.
To prevent spurious oscillations, high-order Finite Volume methods replace the high-degree polynomials by constant reconstructions in regions where spurious oscillations might appear (in particular near discontinuities), see e.g. \cite{godlewski91,leveque02} for an introduction. This \emph{slope-limiting} is impossible for Active Flux without violating continuity (\cite{kerkmann18}): A constant function cannot simultaneously interpolate the given point values and the given average (see Figure \ref{fig:reconstructions}, left).

It is clear, though, that as long as the value $\bar q_i^n$ is contained strictly between $q_{i-\frac12}^n$ and $q_{i+\frac12}^n$, there exist reconstructions that are both monotone and continuous. Such discrete data will be referred to as strictly monotone. In fact, sometimes even the parabolic reconstruction will be oscillation-free (in the case when it degenerates to a straight line, for instance, and generally when $\bar q_i^n$ is close to $\frac{q_{i-\frac12}^n + q_{i+\frac12}^n}{2}$).

Whenever the data are strictly monotone, but the parabolic reconstruction is not, several alternative reconstruction approaches exist in the literature. Suggestions include replacing the parabolic reconstruction by piecewise parabolas (\cite{roe15,kerkmann18,barsukow20swaf}), hyperbolic reconstruction (\cite{kerkmann18}) or a power law (\cite{barsukow19activeflux}, see Figure \ref{fig:reconstructions}, right), all the while ensuring that the new reconstruction passes through the given point values and has the correct cell average. The piecewise definitions are somewhat cumbersome, while the hyperbolic reconstruction has the disadvantage of its parameters not being determinable analytically. 

\begin{figure}
 \centering
 \includegraphics[width=\textwidth]{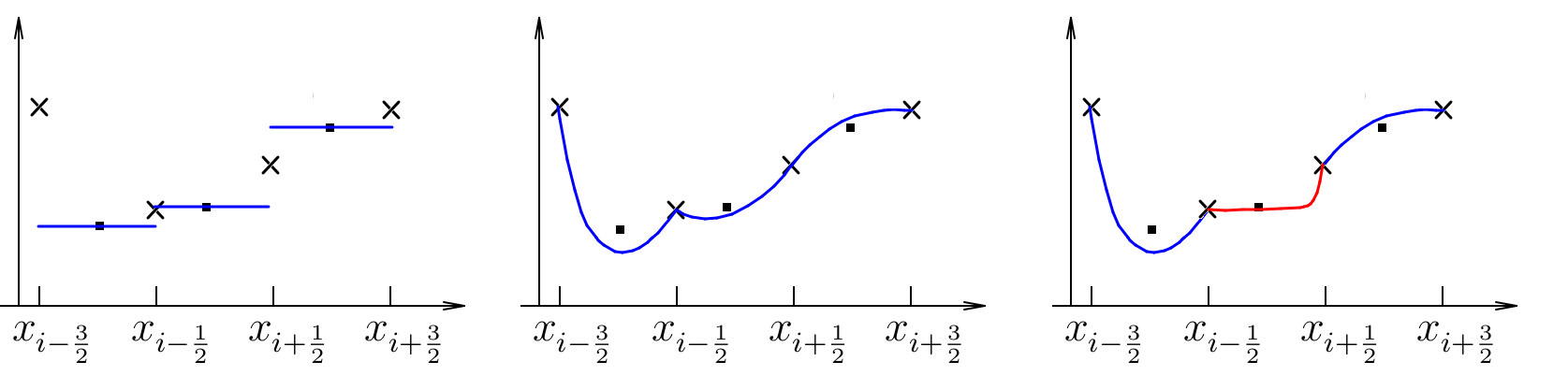}
 \caption{\emph{Left}: Constant reconstructions cannot be used for limiting while maintaining continuity. \emph{Center}: Parabolic reconstructions are globally continuous (but not continuously differentiable). They are not necessarily monotone even if the data are, see cell $C_i$. \emph{Right}: The reconstruction in cell $C_i$ has been replaced by a power law and now is monotone. No continuous reconstruction in cell $C_{i-1}$ can be monotone.}
 \label{fig:reconstructions}
\end{figure}

A continuous and monotone reconstruction becomes impossible once $\bar q_i \geq \max(q_{i-\frac12}^n,q_{i+\frac12}^n)$, or $\bar q_i \leq \min(q_{i-\frac12}^n,q_{i+\frac12}^n)$, apart from the case when all three agree (see Figure \ref{fig:reconstructions}, right). Either continuity or monotonicity has now to be abandoned, and it makes sense to abandon the latter since these situations seem to indicate that indeed the cell under consideration contains a local maximum/minimum of the data. The prescription in \cite{barsukow19activeflux}, for instance, is to reconstruct monotone discrete data by a monotone function (which is a parabola if possible, and otherwise a power law), and to continue using the parabola whenever the discrete data are themselves not monotone. The replacement of a non-monotone reconstruction by a monotone one whenever the discrete data are monotone will be called ``monotonizing the reconstruction''. 

The monotonization of the reconstruction reduces the amount of oscillations (see Section \ref{sec:numerical} for examples), but it does not remove them entirely. This is because the update of the average remains unchanged, while the quadrature used to compute the flux clearly bears the imprint of high-degree polynomials. Even if the values at the quadrature points are monotone, one faces the same problem as before, but now in time instead of space, if some of the values are close to each oher. It these situations the quadrature of the flux over/underestimates the correct value and leads to a wrong value of the average in the next time step. This can create an artifical extremum \emph{in the discrete data}, which then cannot be repaired any more by monotonizing the reconstruction. The path towards a method free of spurious oscillations therefore is to additionally modify the way how the flux is computed, and the challenge is to do so without violating continuity, i.e. without replacing the flux by some low-order Riemann solver, since continuity and the absence of Riemann solvers is a defining feature of Active Flux. A simple strategy would be to base the quadrature on the power law, or whatever function was used to monotonize the reconstruction. This, however, is very difficult to achieve for nonlinear PDEs, and would also introduce upwinding into the quadrature and thus into the average update, something untypical of Active Flux, too.

Instead, first, the notion of the discrete data being monotone is formalized by defining a total variation for the degrees of freedom of Active Flux. 
Monotone data mean data satisfying
\begin{align}
 q_{i+\frac12}^n &\leq \bar q_i^n \leq q_{i-\frac12}^n \qquad \forall i &&\text{or} & q_{i+\frac12}^n &\geq \bar q_i^n \geq q_{i-\frac12}^n \qquad \forall i \label{eq:monoton}
\end{align}
The projection of a monotone function onto these degrees of freedom, for instance, will give rise to monotone data.
It thus is natural to make the following

\begin{definition} \label{def:tv}
 The total variation (TV) associated with the degrees of freedom of Active Flux is
 \begin{align}
  \mathrm{TV}(q^n) := \sum_i |q_{i+\frac12}^n- \bar q_i^n | + \sum_i | \bar q_i^n - q_{i-\frac12}^n| \label{eq:tvdef}
 \end{align}

\end{definition}
Although the two sums look similar, they are not the same, since one can easily find data for which the first sum vanishes while the second does not.
\eqref{eq:tvdef} is the TV of a piecewise linear continuous function passing, in each cell, through the three values $q_{i-\frac12}^n, \bar q_i^n,q_{i+\frac12}^n$ (which would yield a non-conservative reconstruction!). One might conceive other definitions of the discrete total variation, which will then possibly lead to other results. From the above discussion it is clear, though, that e.g. defining the TV only on the averages as is done e.g. in \cite{xu26} for DG, is much too weak in this context. In this work, \eqref{eq:tvdef} is used throughout.

Definition \ref{def:tv} is inspired by that of total variation for Finite Difference or Finite Volume methods. Harten's lemma (\cite{harten83lemma}) then gives sufficient conditions on the numerical method to be Total-Variation Non-Increasing (TVNI). Following common usage, TVNI will be confounded with Total-Variation Diminishing (TVD) in this work. TVD Finite Volume methods arise upon introducing suitable parameters in the high-order numerical flux (which allow it to be understood as a blend with a low-order flux) and choosing their values such that the conditions of Harten's lemma are fulfilled (see e.g. \cite{sweby84}, \emph{flux limiting}). The existence of this beautiful theory is of great advantage because modifications of a flux affect two cells at once, and finding the right modification by trial and error seems virtually impossible. The plan of this paper is to repeat this programme for Active Flux.

First, a generalization of Harten's lemma for the TV \eqref{eq:tvdef} is derived that is adapted to the Active Flux method. It yields sufficient conditions for Active Flux to be TVD. The first challenge is to find the ``right'' form of the Lemma. Since the conditions are only sufficient, whenever they are not satisfied, nothing is known. There exists a multitude of lemmas of Harten's type, and ``the one'' is chosen by its ability to give \emph{useful} sufficient conditions. They should in particular be fulfilled for linear advection whenever the discrete data of Active Flux are such that the parabolic reconstruction is monotone. In this case it is easy to see that neither the point value update nor the average update can create artificial extrema, and a useful generalization of Harten's lemma should allow to prove this fact immediately.

The interesting situation is when the discrete data are monotone, but their non-monotone parabolic reconstruction has been replaced by some other continuous function. At this point also the quadrature used to determine the flux needs to be changed in order to obtain the TVD property. As is shown in Section \ref{sec:examples} with the help of several examples this is again not a simple task that could be achieved by trial and error. In particular, a naive replacement by a Riemann solver would not actually be the right solution. Here, a TVD Active Flux method can be based on the generalization of Harten's lemma once suitable free parameters have been introduced. A useful choice of a parameter is the second challenge: one needs to be able to efficiently compute it to fulfill the conditions of Harten's lemma. The idea of the present paper is that this free parameter should be the intermediate flux $f(q_{i+\frac12}^{n+\frac12})$ at half the time step, with the precise definition given in \eqref{eq:auxflux}. Out of the three quadrature points in time it is the only one expendable, since one cannot change the past, and one is hesitant to change the future, since the point value $q_{i+\frac12}^{n+1}$ is assumed to come from a monotonized reconstruction and can be expected not to violate monotonicity. The TVD Active Flux method is achieved by replacing the intermediate flux evaluation $f(q_{i+\frac12}^{n+\frac12})$ by a value determined differently.

The paper is organized as follows.
After a brief review (in Section \ref{sec:afreview}) of the Active Flux method, Harten's lemma for Active Flux is introduced in Section \ref{sec:harten}. Some intuition for the challenges associated to deriving a TVD Active Flux method is built in Section \ref{sec:examples} before such a method is achieved in Section \ref{sec:TVNI}. Numerical examples follow in Section \ref{sec:numerical}.

\section{A review of Active Flux} \label{sec:afreview}
\subsection{Evolution of the degrees of freedom}

For the update of the point values, the original suggestion from \cite{vanleer77,eymann13} is to use evolution operators: exact or approximate solutions of the initial value problem (IVP) consisting of the PDE \eqref{eq:conslaw} and the reconstruction as initial data. If the evolution operator is approximate, it needs to be of sufficiently high order of accuracy. Denote it by 
\begin{align}
E & \colon \mathbb R^+_0 \times \mathbb R \times C^0(\mathbb R) \to \mathbb R & E(0, x, q_0) &= q_0(x)
\end{align}
Then, $E(t, x, q_0)$ needs to agree with the solution $q(t, x)$ to \eqref{eq:conslaw} up to terms $\mathcal O(t^3)$ (\cite{barsukow19activeflux}).
It is conceivable to solve, for instance, the locally linearized IVP 
\begin{align}
\del_t q + f'(q_1) \del_x q &= 0  & q(0, x) &= q_0(x)
\end{align}
by freezing the Jacobian $f'$ at some convenient value, e.g. $q_1 = q_0(x)$ as
\begin{align}
 E^{\text{loc.lin.}}(t, x, q_0) = q_0(x - f'(q_1)t) \label{eq:linearizedevoop}
\end{align}
but this would generally degrade the overall accuracy to second. It has been shown in \cite{barsukow19activeflux} that several iterations of this procedure, though, are a Picard iteration of the implicit characteristic equation for \eqref{eq:conslaw}, and that third-order accuracy is obtained from linearizing again around solution \eqref{eq:linearizedevoop}
\begin{align}
 E(t, x, q_0) = q_0\Big(x - f'\big(q_0(x - f'(q_1)t)\big)t\Big) \label{eq:updatepointsdetailedhighorder}
\end{align}

Traditional Active Flux (as opposed, for instance, to semi-discrete approaches \cite{abgrall20,abgrall22}) thus evolves the point values as
\begin{align}
 q_{i+\frac12}^{n+1} = E(\Delta t, x_{i+\frac12}, q^n_{\text{recon}}) \label{eq:updatepoints}
\end{align}
where $\Delta t = t^{n+1} - t^n$ not necessarily constant. Many of the details of the evolution operator will not be of great importance in what follows. It \emph{is} important, though, that it has some monotonicity properties. Below, it is assumed that it is a transport operator, i.e. that it takes a value of the reconstruction of the upwind cell and that $q_{i+\frac12}^{n+1}$ is contained between $ q_{i-\frac12}^{n}$ and $ q_{i+\frac12}^{n}$ if this cell is $C_i$ and the data are monotone (see Equation \eqref{eq:upwindevoop}). In this way, the evolution operator inherits monotonicity properties of the reconstruction.

The natural average update is by integrating \eqref{eq:conslaw} over the cell and over the time step
\begin{align}
\frac{\bar q_i^{n+1} - \bar q_i^n}{\Delta t} + \frac{1}{\Delta t} \int_{t^n}^{t^{n+1}}\frac{f(q(t, x_{i+\frac12})) - f(q(t, x_{i-\frac12}))}{\Delta x} \, \dd t &= 0 
\end{align}
To replace the integrals by quadratures one requires sufficiently many accurate quadrature points in time. To be exact for parabolas, an additional point value is required that is customarily placed at half the time step, and is obtained by using the same evolution operator on the same data:
\begin{align}
 q_{i+\frac12}^{n+\frac12} := E\left(\frac{\Delta t}{2}, x_{i+\frac12}; q^n_{\text{recon}}\right) \label{eq:updatepointshalf}
\end{align}
Then, the update of the average reads
\begin{align}
\bar q_i^{n+1} &= \bar q_i^n - \lambda \left( \hat f_{i+\frac12} - \hat f_{i-\frac12} \right ) \label{eq:updateaverage}\\
\hat f_{i+\frac12} &:= \alpha f(q_{i+\frac12}^{n}) + \beta f(q_{i+\frac12}^{n+\frac12}) + \gamma f(q_{i+\frac12}^{n+1}) \label{eq:quadratureflux}
\end{align}
where $\lambda := \frac{\Delta t}{\Delta x}$ and $\alpha, \beta, \gamma$ are the weights of Simpson's quadrature. In particular they are positive and fulfill $\alpha + \beta + \gamma=  1$. Equations \eqref{eq:updatepoints}, \eqref{eq:updatepointshalf}, \eqref{eq:updateaverage} and \eqref{eq:quadratureflux} constitute the Active Flux method. It is fully explicit and one-stage.

The presence of two degrees of freedom per cell is different from classical Finite Difference methods since the degrees of freedom are given different interpretations. One can employ the reconstruction to convert \eqref{eq:updateaverage} into an update equation for a point value at cell center, and thus obtain a method that evolves only point values -- but an interpretation of this method as a Finite Difference method would require handle different update formulas for every second point, which means that the point values can and must still be distinguished.

\begin{example} \label{ex:linadvincrement}
Consider linear advection
\begin{align}
 \del_t q + c \del_x q &= 0 \label{eq:linadv}
\end{align}
with $c>0$ and define the CFL number $\cfl = c \lambda$. One can rewrite for the case of a parabolic reconstruction
\begin{align}
 q_{i+\frac12}^{n+1} &= q_{i+\frac12}^n (1 - 4 \cfl + 3 \cfl^2) + q_{i-\frac12}^n (-2 \cfl + 3 \cfl^2) - 6 \cfl \bar q_i^n (\cfl - 1) \\
 &= q_{i+\frac12}^n - (q_{i+\frac12}^n - \bar q_i^n) \cfl (4 - 3 \cfl) - (\bar q_i^n - q_{i-\frac12}^n) \cfl (3 \cfl - 2)
\end{align}
and using Simpson's rule as flux quadrature
\begin{align}
 \bar q_i^{n+1} &= \bar q_i^n (1 + 2 \cfl)(1 - \cfl)^2 + \bar q_{i-1}^n \cfl^2 (3 - 2\cfl)- q_{i-\frac32}(1-\cfl)\cfl^2 + q_{i-\frac12}^n \cfl (1-\cfl) - q_{i+\frac12}^n (1-\cfl)^2 \cfl \\
 &= \bar q_i^n + \cfl^2 (1 - \cfl) (\bar q_{i-1}^n - q_{i-\frac32}^n) - \cfl( 1+ \cfl - \cfl^2) (\bar q_i^n - q_{i-\frac12}^n) \\&\qquad - \cfl^2 (2 - \cfl) (q_{i-\frac12}^n - \bar q_{i-1}^n) -\cfl(1 - \cfl)^2 (q_{i+\frac12}^n - \bar q_i^n)
\end{align}
\end{example}

\subsection{Limited reconstruction}

Define
\begin{align}
 R^n_i &:= \frac{q_{i+\frac12}^n - \bar q_i^n}{\bar q_i^n - q_{i-\frac12}^n} & &\Rightarrow & R^n_i+1 &:= \frac{q_{i+\frac12}^n - q_{i-\frac12}^n}{\bar q_i^n - q_{i-\frac12}^n}
\end{align}
$R^n_i > 0$ is equivalent to monotone data. Recall the following Theorem from \cite{barsukow19activeflux}:
\begin{theorem} \label{thm:parabola}
 The parabolic reconstruction \eqref{eq:parabolicrecon} is monotone iff $\frac{1}{2} \leq R_i^n \leq 2$. 
\end{theorem}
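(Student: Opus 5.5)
Since the parabola $q^n_{\text{recon},i}$ is a quadratic polynomial, it is monotone on $[-\frac{\Delta x}{2},\frac{\Delta x}{2}]$ iff its derivative, which is affine in $x$, does not change sign on that interval. An affine function is sign-definite on an interval iff its values at the two endpoints have the same sign (or one of them vanishes). So the plan is to compute the derivative of \eqref{eq:parabolicrecon} at both endpoints, express these slopes in terms of $R_i^n$, and read off the condition.

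Write $a := q_{i+\frac12}^n - \bar q_i^n$ and $b := \bar q_i^n - q_{i-\frac12}^n$, so that $R_i^n = a/b$. Then $2\bar q_i^n - q_{i-\frac12}^n - q_{i+\frac12}^n = b - a$ and $q_{i+\frac12}^n - q_{i-\frac12}^n = a+b$. Differentiating \eqref{eq:parabolicrecon} gives
\begin{align}
 \Delta x\, \frac{\dd}{\dd x} q^n_{\text{recon},i}(x) = -6(b-a)\frac{x}{\Delta x} + (a+b).
\end{align}
At $x = \frac{\Delta x}{2}$ this equals $-3(b-a) + a + b = 4a - 2b$. At $x = -\frac{\Delta x}{2}$ it equals $3(b-a) + a + b = 4b - 2a$. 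Monotonicity is therefore equivalent to $(2a-b)(2b-a) \geq 0$.

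Suppose first that $b \neq 0$, which is needed for $R_i^n$ to be defined. Dividing by $b^2$ turns the condition into $(2R_i^n - 1)(2 - R_i^n) \geq 0$. This quadratic inequality in $R_i^n$ holds exactly for $\frac12 \leq R_i^n \leq 2$, which proves both directions of the theorem at once.

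The only real obstacle is the degenerate case $b = 0$, where $R_i^n$ is undefined or infinite. If $b=0$ and $a\neq 0$, the condition becomes $-2a^2 \geq 0$, which is false. This is consistent with reading $R_i^n = \pm\infty$ as lying outside $[\frac12,2]$. If $a=b=0$, the reconstruction is constant and trivially monotone; this case has to be excluded or treated by convention. I would note this briefly, interpreting the statement for data with $\bar q_i^n \neq q_{i-\frac12}^n$.
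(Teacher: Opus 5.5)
Your proof is correct and complete. The endpoint slopes $\Delta x\,(q^n_{\text{recon},i})'(\pm\frac{\Delta x}{2})$ are indeed $4b-2a$ and $4a-2b$. Sign-definiteness of the affine derivative on the closed cell is exactly $(2a-b)(2b-a)\geq 0$. For $b\neq 0$, dividing by $b^2$ gives $(2R_i^n-1)(2-R_i^n)\geq 0$, which is equivalent to $\frac12\leq R_i^n\leq 2$. Your treatment of the degenerate case $b=0$ is also right, including the observation that $a=b=0$ must be excluded or settled by convention because $R_i^n$ is undefined there.

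The paper does not prove this statement itself; it only recalls it from \cite{barsukow19activeflux}, so there is no in-paper argument to compare against. Your endpoint-slope argument is therefore a self-contained substitute. The other natural route requires the vertex $x^\ast=\frac{(a+b)\Delta x}{6(b-a)}$ to lie outside the open cell, i.e.\ $|a+b|\geq 3|b-a|$. This leads to the same inequality $2(R_i^n)^2-5R_i^n+2\leq 0$. Compared with it, your formulation has the small advantage that the linear case $a=b$ (i.e.\ $R_i^n=1$, where there is no vertex) needs no separate treatment.
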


In the following a general reconstruction of the form 
\begin{align}
 q_{\text{recon},i}^n &= q_{i+\frac12}^n - (q_{i+\frac12}^n - q_{i-\frac12}^n) \rho^n_i(x) \label{eq:definitionrho}
\end{align}
will be considered, with $\rho_i^n$ depending on $x$ and $R_i^n$ only and satisfying
$\rho^n_i\left(-\frac{\Delta x}{2}\right) = 1$, $\rho^n_i\left(\frac{\Delta x}{2}\right) = 0$, and
\begin{align}
 \int_{-\frac{\Delta x}{2}}^{\frac{\Delta x}{2}}\rho^n_i(x) \, \dd x &= \Delta x \frac{q_{i+\frac12}^n - \bar q_i^n}{q_{i+\frac12}^n - q_{i-\frac12}^n} = \Delta x \frac{R_i^n}{R_i^n + 1}
\end{align}

For the parabola one finds
\begin{align}
 \rho_i^n(x) &= \left(\frac12-\frac{x}{\Delta x}\right)\frac{6 \frac{x}{\Delta x}(R_i^n-1) + 5 R_i^n - 1}{2(R_i^n+1)}\label{eq:parabolarho}
\end{align}
and for the power law reconstruction as introduced in \cite{barsukow19activeflux} 
\begin{align}
\rho^n_i(x) &= \begin{cases} 1 - \left(\frac{1}{2}  + \frac{x}{\Delta x}\right)^{R^n_i} & R_i^n > 2\\
                            \left(\frac{1}{2} - \frac{x}{\Delta x}\right)^{1/R^n_i} &  0 < R_i^n < \frac12        \end{cases} \label{eq:powerlaw}
               \end{align}
Examples of the power laws \eqref{eq:powerlaw} are shown in Figure \ref{fig:rho}. The combination of \eqref{eq:powerlaw} (for the intervals of $R_i^n$ given), and otherwise \eqref{eq:parabolarho} (if $\frac12 \leq R_i^n \leq 2$) allows to have what was called earlier a monotonized reconstruction, i.e. a monotone $\rho_i^n$ with $0 \leq \rho_i^n(x) \leq 1$ for all $x \in \left[ - \frac{\Delta x}{2}, \frac{\Delta x}{2} \right]$ and all $R_i^n \in (0, \infty)$.

\begin{figure}
 \centering
 \includegraphics[width=0.5\textwidth]{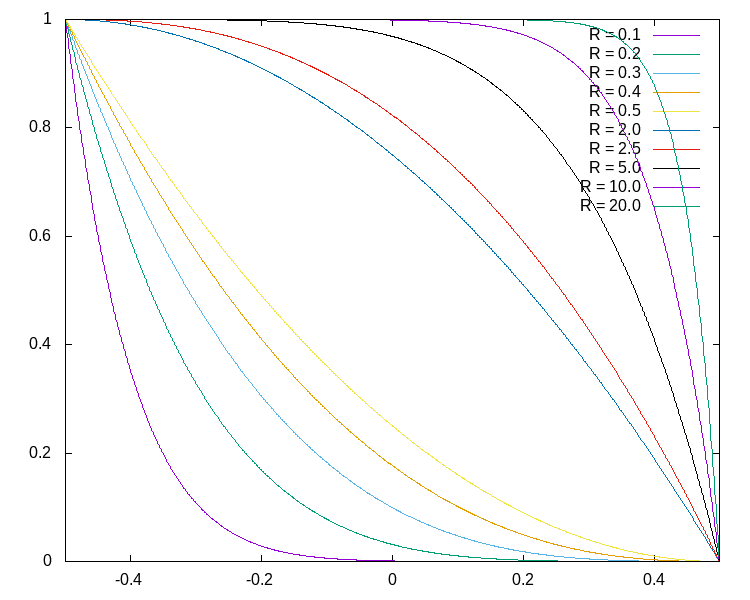}
 \caption{The functions $\rho^n_i$ as defined in \eqref{eq:powerlaw} for various values of $R_i^n$, with $\Delta x = 1$.}
 \label{fig:rho}
\end{figure}

\section{Generalization of Harten's lemma} \label{sec:harten}

Inspired by Example \ref{ex:linadvincrement} consider the update formulas of Active Flux to be in increment form such that
\begin{align}
 q_{i+\frac12}^{n+1} - \bar q_i^{n+1} &= (1 + A_i) (q_{i+\frac12}^n - \bar q_i^n) + B_i (\bar q_i^n - q_{i-\frac12}^n) + C_i (q_{i-\frac12}^n - \bar q_{i-1}^n) + D_i (\bar q_{i-1}^n - q_{i-\frac32}^n)  \label{eq:updatedifference1}\\
 \bar q_i^{n+1} - q_{i-\frac12}^{n+1} &= \tilde A_i (q_{i+\frac12}^n - \bar q_i^n) + (1 + \tilde B_i) (\bar q_i^n - q_{i-\frac12}^n) + \tilde C_i (q_{i-\frac12}^n - \bar q_{i-1}^n) + \tilde D_i (\bar q_{i-1}^n - q_{i-\frac32}^n)  \label{eq:updatedifference2}
\end{align}
for some $A_i, B_i, C_i, D_i, \tilde A_i, \tilde B_i, \tilde C_i, \tilde D_i$ that, of course, are allowed to depend on $q^n$.

\begin{example} \label{ex:linadvincrement2}
Continuing Example \ref{ex:linadvincrement}, one finds for standard Active Flux with parabolic reconstruction:
\begin{align}
 A_i &= \cfl (3 \cfl - 4) +\cfl(1 - \cfl)^2 =  \cfl^2 - 3\cfl   + \cfl^3\\
 B_i &= \cfl (2 - 3 \cfl) + \cfl( 1+ \cfl - \cfl^2) =  3\cfl - 2 \cfl^2 - \cfl^3 
\end{align}
\begin{align}
 D_i &= - \cfl^2 (1 - \cfl) &
 C_i &= \cfl^2 (2 - \cfl)  \\
 \tilde A_i &= -\cfl(1 - \cfl)^2 &
 \tilde B_i &= - \cfl( 1+ \cfl - \cfl^2) 
\end{align}
\begin{align}
 \tilde C_i &= \cfl (4 - 3 \cfl) - \cfl^2 (2 - \cfl) = 4\cfl - 5 \cfl^2  + \cfl^3 \\
 \tilde D_i &= \cfl (3 \cfl - 2) + \cfl^2 (1 - \cfl) = 4 \cfl^2 - 2\cfl - \cfl^3
\end{align}
\end{example}

The most natural generalization of Harten's lemma is the following.

\begin{lemma}
 Assume update formulas \eqref{eq:updatedifference1}--\eqref{eq:updatedifference2}, as well as all $1+A_i, B_i, C_i, D_i, \tilde A_i ,1 + \tilde B_i,\tilde C_i , \tilde D_i  \geq 0$, and
 \begin{align}
  A_i + \tilde A_i + C_{i+1} + \tilde C_{i+1} &\leq 0  &
  B_i + \tilde B_i + D_{i+1} + \tilde D_{i+1} &\leq 0
 \end{align}
 
 Then $\mathrm{TV}(q^{n+1}) \leq \mathrm{TV}(q^n)$.
\end{lemma}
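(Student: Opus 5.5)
Write $a_i := q_{i+\frac12}^n - \bar q_i^n$ and $b_i := \bar q_i^n - q_{i-\frac12}^n$, and let $a_i^{n+1}, b_i^{n+1}$ be the analogous differences at time $t^{n+1}$. Then $\mathrm{TV}(q^n) = \sum_i (|a_i| + |b_i|)$. The proof follows Harten's original argument: take absolute values in the increment forms, use the triangle inequality together with the assumed nonnegativity of all coefficients, and then reindex the sums so that each old difference $a_i$ or $b_i$ is collected with its total coefficient. The two summed inequalities in the assumption are exactly what makes these total coefficients at most one.

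Concretely, by \eqref{eq:updatedifference1}, \eqref{eq:updatedifference2} and the nonnegativity of $1+A_i, B_i, C_i, D_i, \tilde A_i, 1+\tilde B_i, \tilde C_i, \tilde D_i$,
\begin{align*}
|a_i^{n+1}| &\leq (1+A_i)|a_i| + B_i|b_i| + C_i|a_{i-1}| + D_i|b_{i-1}|,\\
|b_i^{n+1}| &\leq \tilde A_i|a_i| + (1+\tilde B_i)|b_i| + \tilde C_i|a_{i-1}| + \tilde D_i|b_{i-1}|.
\end{align*}
Next sum over $i$ and shift the index $i \to i+1$ in the terms involving $a_{i-1}$ and $b_{i-1}$. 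This gives
\begin{align*}
\mathrm{TV}(q^{n+1}) \leq \sum_i \big(1 + A_i + \tilde A_i + C_{i+1} + \tilde C_{i+1}\big)|a_i| + \sum_i \big(1 + B_i + \tilde B_i + D_{i+1} + \tilde D_{i+1}\big)|b_i|.
\end{align*}
By the two assumed inequalities, each bracket is at most $1$. Since the brackets are sums of nonnegative coefficients they are also nonnegative, although only the upper bound is needed. Hence $\mathrm{TV}(q^{n+1}) \leq \mathrm{TV}(q^n)$.

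The only delicate point is the index shift. It requires the sums to converge absolutely, so one should assume $\mathrm{TV}(q^n)<\infty$ (or work with finitely supported or periodic data) and bounded coefficients; then the rearrangement is legitimate. Beyond that bookkeeping, which checks that the shifted coefficient $C_{i+1}$ indeed multiplies $|a_i|$, the argument is routine and there is no real obstacle.
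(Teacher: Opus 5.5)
Your proof is correct and is the triangle-inequality argument the paper refers to when it omits the proof; it is the same bookkeeping as in the paper's proofs of Lemmas~\ref{lem:harten1} and~\ref{lem:harten}, without the $R_i^n$ factors, and it correctly attaches the shifted coefficients $C_{i+1},\tilde C_{i+1},D_{i+1},\tilde D_{i+1}$ to $|a_i|$ and $|b_i|$. One small remark: your absolute-convergence caveat is unnecessary, since after the triangle inequality every term is nonnegative, so the sums may be regrouped and index-shifted freely in $[0,\infty]$.
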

The proof (via the triangle inequality) is omitted because this Lemma is not useful: its conditions are never true for the parabolic reconstruction in conjunction with Simpson's rule (see Example \ref{ex:linadvincrement2}), e.g. $D_i$ is not positive. Thus the Lemma does not let us conclude anything, and better sufficient conditions are needed.

Observe that in an inequality such as $x + y + z \geq 0$ one does not need to choose, as sufficient condition, 
$ x \geq 0$, $y\geq 0$, $z \geq 0$ individually, but can for example take $x + y \geq 0$, $z \geq 0$. 
Combine therefore the terms in \eqref{eq:updatedifference1}--\eqref{eq:updatedifference2} in the following way:
\begin{align}
 q_{i+\frac12}^{n+1} - \bar q_i^{n+1} &=  \left( (1 + A_i) R_i^n + B_i \right) (\bar q_i^n - q_{i-\frac12}^n) + (C_i R_{i-1}^n + D_i) (\bar q_{i-1}^n - q_{i-\frac32}^n)  \\
 \bar q_i^{n+1} - q_{i-\frac12}^{n+1} &= ( \tilde A_i R_i^n + 1 + \tilde B_i) (\bar q_i^n - q_{i-\frac12}^n) + (\tilde C_i R_{i-1}^n  + \tilde D_i) (\bar q_{i-1}^n - q_{i-\frac32}^n)  
\end{align}
since $R_i^n (\bar q_i^n - q_{i-\frac12}^n) = q_{i+\frac12}^n - \bar q_i^n$.

\begin{lemma} \label{lem:harten1}
 Assume update formulas \eqref{eq:updatedifference1}--\eqref{eq:updatedifference2} with 
 \begin{align}
  (1 + A_i) R_i^n + B_i &\geq 0 &
  C_i R_{i-1}^n + D_i &\geq 0 \\
  \tilde A_i R_i^n + 1 + \tilde B_i &\geq 0 & \tilde C_i R_{i-1}^n  + \tilde D_i &\geq 0
 \end{align}
as well as
 \begin{align}
  -1 \leq A_i+ C_{i+1} + \tilde A_i  +  \tilde C_{i+1} &\leq 0  &
  B_i + D_{i+1}  + \tilde B_i   + \tilde D_{i+1} &\leq 0
 \end{align}

 Then $\mathrm{TV}(q^{n+1}) \leq \mathrm{TV}(q^n)$.
\end{lemma}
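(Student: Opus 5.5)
The plan is to use the regrouped increment form given just before Lemma \ref{lem:harten1}, apply the triangle inequality once, and then compare the result cell by cell with $\mathrm{TV}(q^n)$.

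\emph{Notation.} Write $\Delta_i := \bar q_i^n - q_{i-\frac12}^n$, so that $q_{i+\frac12}^n - \bar q_i^n = R_i^n \Delta_i$ and
\begin{align}
\mathrm{TV}(q^n) = \sum_i \left(|R_i^n| + 1\right) |\Delta_i| .
\end{align}
Define the four nonnegative coefficients from the hypotheses,
\begin{align}
P_i &:= (1+A_i)R_i^n + B_i, & Q_i &:= C_i R_{i-1}^n + D_i, \\
\tilde P_i &:= \tilde A_i R_i^n + 1 + \tilde B_i, & \tilde Q_i &:= \tilde C_i R_{i-1}^n + \tilde D_i .
\end{align}
The regrouped update formulas then read
\begin{align}
q_{i+\frac12}^{n+1} - \bar q_i^{n+1} &= P_i \Delta_i + Q_i \Delta_{i-1}, \\
\bar q_i^{n+1} - q_{i-\frac12}^{n+1} &= \tilde P_i \Delta_i + \tilde Q_i \Delta_{i-1} .
\end{align}

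\emph{Triangle inequality and reindexing.} Take absolute values in both formulas. Since all four coefficients are nonnegative, the triangle inequality gives, after summing over $i$,
\begin{align}
\mathrm{TV}(q^{n+1}) \leq \sum_i (P_i + \tilde P_i)|\Delta_i| + \sum_i (Q_i + \tilde Q_i) |\Delta_{i-1}| .
\end{align}
Shift the index in the second sum so that both sums multiply $|\Delta_i|$. The key bookkeeping point is that $Q_{i+1}$ and $\tilde Q_{i+1}$ contain $R_i^n$, the same ratio as $P_i$ and $\tilde P_i$. The total coefficient of $|\Delta_i|$ is therefore
\begin{align}
\left(1 + S_i\right) R_i^n + 1 + T_i,
\end{align}
where
\begin{align}
S_i &:= A_i + \tilde A_i + C_{i+1} + \tilde C_{i+1}, & T_i &:= B_i + \tilde B_i + D_{i+1} + \tilde D_{i+1} .
\end{align}

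\emph{Cellwise comparison.} It remains to show that this coefficient is at most $|R_i^n| + 1$. We have $1 + T_i \leq 1$ by the second condition. For the first term, the assumption $-1 \leq S_i \leq 0$ gives $0 \leq 1+S_i \leq 1$.
\begin{itemize}
\item If $R_i^n \geq 0$, then $(1+S_i)R_i^n \leq R_i^n = |R_i^n|$.
\item If $R_i^n < 0$, i.e.\ the data in cell $i$ are not monotone, then $(1+S_i)R_i^n \leq 0 \leq |R_i^n|$.
\end{itemize}
The case $R_i^n<0$ is the only place where the lower bound $-1$ is needed. Summing over $i$ yields $\mathrm{TV}(q^{n+1}) \leq \mathrm{TV}(q^n)$.

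\emph{Main obstacle.} The algebra is routine once the regrouping is in place. The point needing care is the degenerate case $\Delta_i = 0$, where $R_i^n$ is undefined. There I would either interpret $R_i^n\Delta_i$ as $q_{i+\frac12}^n - \bar q_i^n$ throughout, or restrict to data with $\Delta_i \neq 0$ and conclude by continuity of both sides in the data. In the first option the cell contributes $|q_{i+\frac12}^n-\bar q_i^n|$, with coefficient $1+S_i\leq 1$ (using nonnegativity of the relevant terms); I would verify in this step that the hypotheses are read consistently with that interpretation. I would also assume $\mathrm{TV}(q^n)<\infty$ (e.g.\ finitely many nonzero differences, or periodic data), so that the reindexing of the sums is legitimate.
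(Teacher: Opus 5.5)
Your proof is correct and is essentially the paper's argument: triangle inequality on the regrouped increment form, reindexing so that $|\bar q_i^n - q_{i-\frac12}^n|$ carries the coefficient $(1+S_i)R_i^n + 1 + T_i$, and then using $0 \leq 1+S_i \leq 1$ and $T_i \leq 0$. Your case split on the sign of $R_i^n$ is the paper's step $(1+S_i)R_i^n|\Delta_i| \leq (1+S_i)|q_{i+\frac12}^n - \bar q_i^n|$, which is where the lower bound $-1$ is used. The degenerate case $\Delta_i = 0$ is not addressed in the paper either. Of your two remedies, only the first is viable: read $R_i^n\Delta_i$ as $q_{i+\frac12}^n - \bar q_i^n$, together with a corresponding reading of the sign hypotheses. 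The continuity argument is not available, because the coefficients are only assumed to depend on $q^n$, not to depend on it continuously.
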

\begin{proof}
 By triangle inequality
 \begin{align}
  \mathrm TV(q^{n+1}) &= \sum_i |q_{i+\frac12}^{n+1}- \bar q_i^{n+1} | + \sum_i | \bar q_i^{n+1} - q_{i-\frac12}^{n+1}| \\
   &\leq\sum_i \Big( \left| (1 + A_i) R_i^n + B_i \right| |\bar q_i^n - q_{i-\frac12}^n| + |C_i R_{i-1}^n + D_i| |\bar q_{i-1}^n - q_{i-\frac32}^n| \nonumber \\
  &\qquad \qquad\qquad \qquad  + | \tilde A_i R_i^n + 1 + \tilde B_i| |\bar q_i^n - q_{i-\frac12}^n| + |\tilde C_i R_{i-1}^n  + \tilde D_i| |\bar q_{i-1}^n - q_{i-\frac32}^n| \Big )\\
  &=\sum_i \Big( \left( (1 + A_i+ C_{i+1} + \tilde A_i  +  \tilde C_{i+1}  )R_i^n \right) |\bar q_{i}^n - q_{i-\frac12}^n| \nonumber \\
  & \qquad \qquad\qquad \qquad + ( 1+ B_i + D_{i+1}  + \tilde B_i   + \tilde D_{i+1}) |\bar q_{i}^n - q_{i-\frac12}^n|  \Big )\\
  &\leq \sum_i \Big(  (1 + A_i+ C_{i+1} + \tilde A_i  +  \tilde C_{i+1}  ) |\bar q_{i+\frac12}^n - \bar q_{i}^n| \nonumber \\
  &\qquad \qquad\qquad \qquad  + ( 1+ B_i + D_{i+1}  + \tilde B_i   + \tilde D_{i+1}) |\bar q_{i}^n - q_{i-\frac12}^n|  \Big )\\
  &\leq \mathrm{TV}(q^n)
 \end{align}

\end{proof}

The following result demonstrates that Lemma \ref{lem:harten1} is indeed useful, since it allows to prove the TVD property for Active Flux in the case of a monotone parabolic reconstruction:

\begin{theorem}
 Assume $\frac12 \leq  R_i \leq 2$. Then Active Flux for linear advection \eqref{eq:linadv} with a parabolic reconstruction, Simpson's rule for the flux quadrature and subject to the usual CFL condition is TVD.
\end{theorem}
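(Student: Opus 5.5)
The plan is to apply Lemma \ref{lem:harten1} directly, with the coefficients of Example \ref{ex:linadvincrement2}. For linear advection all of $A_i,\dots,\tilde D_i$ are constants depending only on the CFL number $\cfl$. The CFL condition is taken to be $0 \le \cfl \le 1$. The eight sign and sum conditions of the lemma then become explicit polynomial inequalities in $\cfl$ and in $R_i^n$ (or $R_{i-1}^n$). Each of the four positivity conditions is affine in $R$, so it suffices to check it at the endpoints $R=\frac12$ and $R=2$ of the admissible interval. Alternatively, one can factor it and read off the range of $R$ for which it holds.

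First I dispose of the two sum conditions. Adding the coefficients from Example \ref{ex:linadvincrement2} (they do not depend on $i$) gives
\begin{align}
 A_i + C_{i+1} + \tilde A_i + \tilde C_{i+1} &= (\cfl^2 - 3\cfl + \cfl^3) + (2\cfl^2 - \cfl^3) - \cfl(1-\cfl)^2 + (4\cfl - 5\cfl^2 + \cfl^3) = 0,\\
 B_i + D_{i+1} + \tilde B_i + \tilde D_{i+1} &= 0 .
\end{align}
Both conditions $-1 \le 0 \le 0$ and $0 \le 0$ therefore hold trivially. This is conservation, i.e.\ equality in the triangle-inequality step for monotone data.

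Next come the four positivity conditions.
\begin{itemize}
 \item $C_i R_{i-1}^n + D_i = \cfl^2\big((2-\cfl)R_{i-1}^n - (1-\cfl)\big) \ge 0$ holds iff $R_{i-1}^n \ge \frac{1-\cfl}{2-\cfl}$. This is implied by $R_{i-1}^n \ge \frac12$.
 \item $\tilde A_i R_i^n + 1 + \tilde B_i = (1-\cfl)^2 (1 + \cfl - \cfl R_i^n) \ge 0$ holds iff $R_i^n \le \frac{1+\cfl}{\cfl}$. This is implied by $R_i^n \le 2$.
 \item $\tilde C_i R_{i-1}^n + \tilde D_i = \cfl\big((1-\cfl)(4-\cfl)R_{i-1}^n - (2 - 4\cfl + \cfl^2)\big)$ has nonnegative slope in $R_{i-1}^n$. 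At $R_{i-1}^n = \frac12$ it equals $\frac{\cfl^2(3-\cfl)}{2} \ge 0$, so it is nonnegative on the whole interval.
 \item $(1+A_i)R_i^n + B_i$ has slope $(1-\cfl)(1-2\cfl-\cfl^2)$, which changes sign in $[0,1]$, so both endpoints are needed. At $R_i^n = \frac12$ the value is $\frac12(1-\cfl)(1+4\cfl+\cfl^2) \ge 0$. At $R_i^n = 2$ the value is $(1-\cfl)^2(2+\cfl) \ge 0$. By linearity the expression is nonnegative for all $R_i^n \in [\frac12, 2]$.
\end{itemize}
Together with the sum conditions, all hypotheses of Lemma \ref{lem:harten1} hold, and so $\mathrm{TV}(q^{n+1}) \le \mathrm{TV}(q^n)$.

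I expect the main obstacle to be bookkeeping rather than anything conceptual. One has to get the coefficients and the factorizations right, in particular for $(1+A_i)R+B_i$, whose slope is not of fixed sign; this is what forces checking both ends of $[\frac12,2]$ instead of just one. It is also worth noting where the hypothesis $\frac12 \le R \le 2$ of Theorem \ref{thm:parabola} enters: only through the lower bound for $C_iR+D_i$ and $\tilde C_iR+\tilde D_i$ and the upper bound for $\tilde A_iR+1+\tilde B_i$, each of which becomes sharp at $\cfl\to 0$ or $\cfl \to 1$. The case of vanishing denominators $\bar q_i^n = q_{i-\frac12}^n$ in $R_i^n$ I would treat by the convention that the corresponding terms drop out of the regrouped increment form.
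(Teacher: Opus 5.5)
Your proof is correct and follows the paper's argument: apply Lemma \ref{lem:harten1} with the constant coefficients of Example \ref{ex:linadvincrement2} and check the four sign conditions against $\frac12\le R\le 2$; your factorizations and endpoint values all check out. Two of your additions are small improvements: you verify explicitly that both sum conditions vanish identically, which the paper leaves implicit, and you test $(1+A_i)R_i^n+B_i$ at the endpoints instead of using the paper's case split at $\cfl=\sqrt2-1$, where the slope vanishes. Only your closing remark is slightly off, since $(1+A_i)R_i^n+B_i$ also needs $R_i^n\le 2$ sharply as $\cfl\to1$: its threshold $\frac{\cfl(\cfl+3)}{\cfl^2+2\cfl-1}$ decreases to $2$.
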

\begin{proof}
By assumption and Theorem \ref{thm:parabola} the parabolic reconstruction is monotone. Recall that $\cfl = c \lambda > 0$, $\cfl \leq 1$.
The coefficients in the update formulas \eqref{eq:updatedifference1}--\eqref{eq:updatedifference2}  are given in Example \ref{ex:linadvincrement2}.
 According to Lemma \ref{lem:harten1} one needs to show the non-negativity of the coefficients:
 \begin{align}
 (1 + A_i) R_i^n + B_i &= (1 + \cfl^2 - 3\cfl + \cfl^3) R_i^n + 3\cfl - 2 \cfl^2 - \cfl^3 \geq 0\\
 C_i R_{i-1}^n + D_i &= \cfl^2 \Big ( (2 - \cfl) R_{i-1}^n - (1 - \cfl) \Big ) \geq 0\\
 \tilde A_i R_i^n + 1 - \tilde B_i &= -\cfl(1 - \cfl)^2 R_i^n + 1 - \cfl( 1+ \cfl - \cfl^2)  \geq 0\\
 \tilde C_i R_{i-1}^n  + \tilde D_i &= (4\cfl - 5 \cfl^2  + \cfl^3)R_{i-1}^n + 4 \cfl^2 - 2\cfl - \cfl^3 \geq 0
 \end{align}
 
 To this end one should first observe that
 \begin{align}
  1 + \cfl^2 - 3\cfl + \cfl^3 &\begin{cases} > 0 & \cfl < \sqrt{2}-1 \\
                               < 0 & \cfl > \sqrt{2}-1
                              \end{cases} \\
 4\cfl - 5 \cfl^2  + \cfl^3 &\geq 0  \qquad \cfl \in [0,1]
 \end{align}
 
 One then finds
 \begin{align}
 R_i^n &\begin{cases}\displaystyle \geq \frac{- 3\cfl + 2 \cfl^2 + \cfl^3}{1 + \cfl^2 - 3\cfl + \cfl^3} \in (-\infty, 0] & 0 \leq \cfl < \sqrt{2}-1 \\ \displaystyle \leq \frac{- 3\cfl + 2 \cfl^2 + \cfl^3}{1 + \cfl^2 - 3\cfl + \cfl^3} \in [2, \infty) & \sqrt{2}-1 < \cfl \leq 1 \end{cases} \\
 R_{i-1}^n &\geq \frac{1 - \cfl}{2 - \cfl} \in \left[0 , \frac12\right]\\
  R_i^n &\leq \frac{ 1 - \cfl( 1+ \cfl - \cfl^2) }{\cfl(1 - \cfl)^2} \in [2, \infty] \\
 R_{i-1}^n &\geq \frac{- 4 \cfl + 2 + \cfl^2 }{4 - 5 \cfl  + \cfl^2} \in \left(-\infty, \frac12\right]
 \end{align}
 
 These inequalities are true, because by assumption $\frac12 \leq R_i^n \leq 2$.
 
\end{proof}

Instead of \eqref{eq:updatedifference1}--\eqref{eq:updatedifference2} it is helpful to consider also the following update formulas
\begin{align}
 q_{i+\frac12}^{n+1} - \bar q_i^{n+1} &= q_{i+\frac12}^n - \bar q_i^n + S_{i+1} (q_{i+\frac32}^n - q_{i+\frac12}^n) + P_i (q_{i+\frac12}^n - q_{i-\frac12}^n) + Q_{i-1} (q_{i-\frac12}^n - q_{i-\frac32}^n) \label{eq:updatedifference1new}\\
 \bar q_i^{n+1} - q_{i-\frac12}^{n+1} &= \bar q_i^n - q_{i-\frac12}^n   + \tilde S_{i+1} (q_{i+\frac32}^n - q_{i+\frac12}^n) + \tilde P_i (q_{i+\frac12}^n - q_{i-\frac12}^n) + \tilde Q_{i-1} (q_{i-\frac12}^n - q_{i-\frac32}^n)\label{eq:updatedifference2new}
\end{align}
and the corresponding generalization of Harten's lemma:
\begin{lemma} \label{lem:harten}
  Assume update formulas \eqref{eq:updatedifference1new}--\eqref{eq:updatedifference2new} with 
 \begin{align}
   R_i^n + P_i( R_i^n  + 1)&\geq 0 &
   S_{i+1} (R_{i+1}^n + 1)&\geq 0 &
   Q_{i-1} (R_{i-1}^n + 1)&\geq 0 \\  
  1 + \tilde P_i (R_i^n + 1) &\geq 0 &
  \tilde S_{i+1} (R_{i+1}^n + 1) & \geq 0 &
  \tilde Q_{i-1}(R_{i-1}^n + 1) & \geq 0
 \end{align}
as well as
 \begin{align}
  -1 \leq  S_{i}+ P_i  + Q_{i} + \tilde S_{i} + \tilde P_i     + \tilde Q_{i}&\leq 0 \label{eq:consistencycondlemma}
 \end{align}

 Then $\mathrm{TV}(q^{n+1}) \leq \mathrm{TV}(q^n)$.
\end{lemma}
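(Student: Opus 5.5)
The plan is to follow the same route as the proof of Lemma \ref{lem:harten1}: rewrite every increment in terms of the single quantity $d_i := \bar q_i^n - q_{i-\frac12}^n$, then apply the triangle inequality and reindex. Two identities from the definition of $R_i^n$ do the rewriting:
\begin{align}
 q_{i+\frac12}^n - q_{i-\frac12}^n &= (R_i^n + 1)\, d_i, &
 q_{i+\frac12}^n - \bar q_i^n &= R_i^n d_i .
\end{align}
Substituting them into \eqref{eq:updatedifference1new}--\eqref{eq:updatedifference2new} gives
\begin{align}
 q_{i+\frac12}^{n+1} - \bar q_i^{n+1} &= \big(R_i^n + P_i(R_i^n+1)\big) d_i + S_{i+1}(R_{i+1}^n+1) d_{i+1} + Q_{i-1}(R_{i-1}^n+1) d_{i-1},\\
 \bar q_i^{n+1} - q_{i-\frac12}^{n+1} &= \big(1 + \tilde P_i(R_i^n+1)\big) d_i + \tilde S_{i+1}(R_{i+1}^n+1) d_{i+1} + \tilde Q_{i-1}(R_{i-1}^n+1) d_{i-1}.
\end{align}
The six coefficients appearing here are exactly the quantities assumed non-negative in the statement.

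Next I take absolute values, apply the triangle inequality and sum over $i$. Since all coefficients are non-negative, the absolute values of the coefficients can be dropped. I then shift indices so that each term multiplies $|d_i|$: the $S_{i+1}$ and $\tilde S_{i+1}$ terms become $S_i$ and $\tilde S_i$, and the $Q_{i-1}$ and $\tilde Q_{i-1}$ terms become $Q_i$ and $\tilde Q_i$. Both shifts assume the sums are finite, e.g.\ compactly supported or periodic data. The result is
\begin{align}
 \mathrm{TV}(q^{n+1}) \leq \sum_i \Big( R_i^n + 1 + (R_i^n+1)\big(S_i + P_i + Q_i + \tilde S_i + \tilde P_i + \tilde Q_i\big) \Big) |d_i| .
\end{align}
Writing $\Sigma_i$ for the bracketed sum $S_i + P_i + Q_i + \tilde S_i + \tilde P_i + \tilde Q_i$, this becomes $\sum_i (R_i^n+1)(1+\Sigma_i)|d_i|$.

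The target to compare against is
\begin{align}
 \mathrm{TV}(q^n) = \sum_i \big( |R_i^n| + 1 \big) |d_i| ,
\end{align}
using $|q_{i+\frac12}^n - \bar q_i^n| = |R_i^n|\,|d_i|$.

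The main obstacle is the sign of $R_i^n+1$, since the step from the bound above to $\mathrm{TV}(q^n)$ depends on it.
\begin{itemize}
 \item If $R_i^n + 1 \geq 0$, then \eqref{eq:consistencycondlemma} gives $0 \le 1+\Sigma_i \le 1$. Hence $(R_i^n+1)(1+\Sigma_i) \le R_i^n + 1 \le |R_i^n| + 1$, and the summand is controlled by the corresponding summand of $\mathrm{TV}(q^n)$.
 \item If $R_i^n + 1 < 0$, then $(R_i^n+1)(1+\Sigma_i) \le 0 \le |R_i^n| + 1$, because $1+\Sigma_i \ge 0$. So the lower bound $-1$ in \eqref{eq:consistencycondlemma} is what makes this case work.
\end{itemize}
In both cases each summand is bounded termwise by the corresponding summand of $\mathrm{TV}(q^n)$, and summing gives $\mathrm{TV}(q^{n+1}) \leq \mathrm{TV}(q^n)$.

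Cells with $d_i = 0$, where $R_i^n$ is undefined, need separate care. In such a cell I would write the increments in the original variables, with $q_{i+\frac12}^n - \bar q_i^n$ playing the role of $(R_i^n+1)d_i$. Alternatively, one can interpret the products $S_{i+1}(R_{i+1}^n+1)$ and the like as limits, and the same termwise argument goes through.
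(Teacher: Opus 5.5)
Your proof is correct and follows the paper's argument. You rewrite both increments in terms of $\bar q_i^n - q_{i-\frac12}^n$ via $R_i^n$, apply the triangle inequality using the six non-negativity hypotheses, reindex to the per-cell factor $(R_i^n+1)(1+S_i+P_i+Q_i+\tilde S_i+\tilde P_i+\tilde Q_i)$, and compare with $\mathrm{TV}(q^n)$. Your case split on the sign of $R_i^n+1$ is a welcome refinement: the paper writes that comparison as an equality, which silently uses $|q_{i+\frac12}^n-\bar q_i^n| = R_i^n\,|\bar q_i^n-q_{i-\frac12}^n|$ (i.e.\ $R_i^n\ge 0$), whereas you make explicit where the lower bound $-1$ is needed for non-monotone data.
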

\begin{proof}
 Rewrite \eqref{eq:updatedifference1new}--\eqref{eq:updatedifference2new} first as
\begin{align*}
 q_{i+\frac12}^{n+1} - \bar q_i^{n+1} &= (1 + P_i) (q_{i+\frac12}^n - \bar q_i^n) + P_i (\bar q_i^n - q_{i-\frac12}^n) + S_{i+1} (q_{i+\frac32}^n - q_{i+\frac12}^n)  + Q_{i-1} (q_{i-\frac12}^n - q_{i-\frac32}^n) \\
 &= \big((1 + P_i) R_i^n  + P_i\big) (\bar q_i^n - q_{i-\frac12}^n) + S_{i+1} (R_{i+1}^n + 1) (\bar q_{i+1}^n - q_{i+\frac12}^n) \\&\qquad \qquad \nonumber   + Q_{i-1} (R_{i-1}^n + 1) (\bar q_{i-1}^n - q_{i-\frac32}^n) \\
 \bar q_i^{n+1} - q_{i-\frac12}^{n+1} &= \big(1 + \tilde P_i (R_i^n + 1)\big) (\bar q_i^n - q_{i-\frac12}^n)  + \tilde S_{i+1} (R_{i+1}^n + 1) (\bar q_{i+1}^n - q_{i+\frac12}^n) \\&\qquad \qquad \nonumber + \tilde Q_{i-1}(R_{i-1}^n + 1) (\bar q_{i-1}^n - q_{i-\frac32}^n)
\end{align*}
Then, by triangle inequality
 \begin{align}
  \mathrm TV(q^{n+1}) 
   &\leq  \sum_i \Big((1 + P_i) R_i^n  + P_i\Big) |\bar q_i^n - q_{i-\frac12}^n| + S_{i+1} (R_{i+1}^n + 1) |\bar q_{i+1}^n - q_{i+\frac12}^n|  \\&\qquad\qquad \nonumber + Q_{i-1} (R_{i-1}^n + 1) |\bar q_{i-1}^n - q_{i-\frac32}^n| +  \Big(1 + \tilde P_i (R_i^n + 1)\Big ) |\bar q_i^n - q_{i-\frac12}^n|  \\&\qquad\qquad \nonumber + \tilde S_{i+1} (R_{i+1}^n + 1) |\bar q_{i+1}^n - q_{i+\frac12}^n| + \tilde Q_{i-1}(R_{i-1}^n + 1) |\bar q_{i-1}^n - q_{i-\frac32}^n|\\
   & = \sum_i (1  + S_{i}+ P_i  + Q_{i} + \tilde S_{i} + \tilde P_i     + \tilde Q_{i}) |q_{i+\frac12}^n-\bar q_i^n| \\& \qquad \qquad \nonumber + \sum_i (1 + S_{i}+ P_i+ Q_{i}  + \tilde S_{i}  + \tilde P_i   + \tilde Q_{i}       ) |\bar q_i^n - q_{i-\frac12}^n|  \\
  &\leq \mathrm{TV}(q^n)
 \end{align}

\end{proof}

\section{Special cases of situations requiring flux limiting} \label{sec:examples}

In this Section, consider linear advection \eqref{eq:linadv} with $c>0$ for simplicity.

\subsection{Right-moving jump} \label{ssec:examplesrightmovjump}

Consider monotone data and in some cell $i$ the average to be
\begin{align}
 \bar q_i^n = q_{i-\frac12}^n + \epsilon \qquad \epsilon > 0
\end{align}
Assume moreover that $q_{i-\frac12}^{n+1} = q_{i-\frac12}^{n+\frac12} = q_{i-\frac12}^n$ which is the case if the data are constant to the left of $x_{i-\frac12}$, see Figure \ref{fig:example1}, top left.

\begin{figure}
 \centering
 \includegraphics[width=\textwidth]{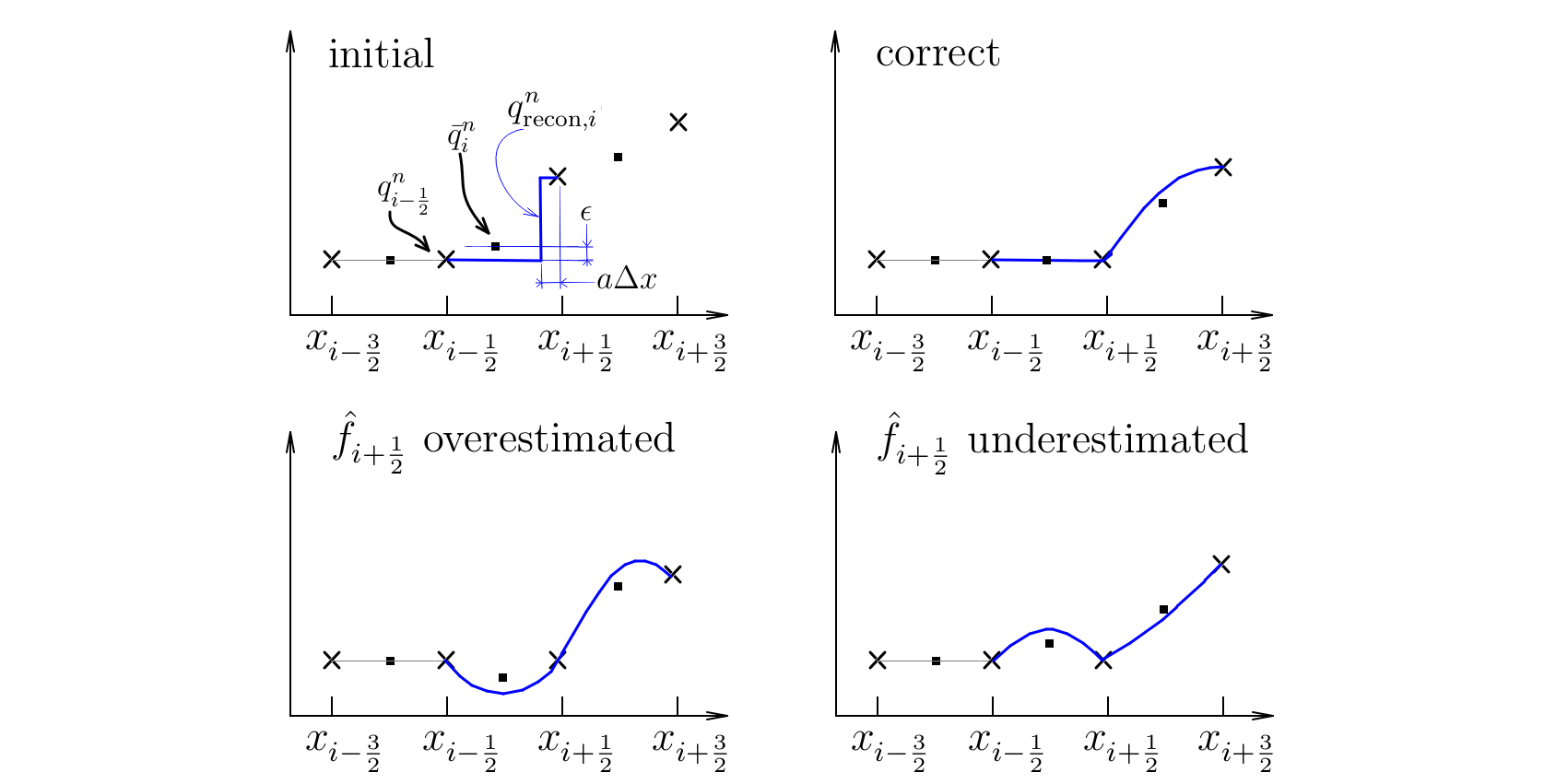}
 \caption{\emph{Top left}: Initial data and a reconstruction in cell $C_i$. \emph{Top right}: Exact evolution over one time step remains monotone. \emph{Bottom}: The assumed choice of a large CFL implies $q_{i+\frac12}^{n+1} = q_{i-\frac12}^n$, and then there remains only one value of the flux that leads to a monotone solution. The results of over/underestimating it are shown.}
 \label{fig:example1}
\end{figure}

Since the data are monotone, there exists a continuous monotone reconstruction that is (e.g. in the $L^2$ sense) arbitrarily close to the following (discontinuous) one:
\begin{align}
 q_{\text{recon},i}^n(x) = \begin{cases} q_{i-\frac12}^n & x < \frac{\Delta x}{2} - a \Delta x \\ q_{i+\frac12}^n & x > \frac{\Delta x}{2} - a \Delta x  \end{cases}   \qquad x \in \left[-\frac{\Delta x}{2}, \frac{\Delta x}{2}\right]
\end{align}
which is conservative if $a = \frac{\epsilon}{q_{i+\frac12}^n-q_{i-\frac12}^n}$.
For small $\epsilon$ and a CFL number $\cfl := c \lambda$ greater than $2a$ one will find $q_{i+\frac12}^{n+1} = q_{i+\frac12}^{n+\frac12} = q_{i-\frac12}^n$. 
Assume now that the flux is computed using the standard quadrature
\begin{align}
 \hat f_{i+\frac12} = c \left( \alpha q_{i+\frac12}^{n} + \beta q_{i+\frac12}^{n+\frac12} + \gamma q_{i+\frac12}^{n+1} \right)
\end{align}
with $\alpha + \beta + \gamma = 1$.
Then, in the situation described above,
\begin{align}
 \bar q_i^{n+1} &= \bar q_i^n - \cfl \left( \alpha q_{i+\frac12}^n + (\beta + \gamma) q_{i-\frac12}^n  - q_{i-\frac12}^n  \right) = \bar q_i^n - \cfl \alpha \left(  q_{i+\frac12}^n - q_{i-\frac12}^n  \right)
\end{align}

 Then, enforcing monotonicity \eqref{eq:monoton} at the new time step,

\begin{align}
 q_{i+\frac12}^{n+1} - \bar q_i^{n+1} &= q_{i-\frac12}^{n} - \bar q_i^n + \cfl \alpha \left(  q_{i+\frac12}^n - q_{i-\frac12}^n  \right)  \geq 0\\
 \cfl \alpha &\geq \frac{\bar q_i^n - q_{i-\frac12}^{n}}{q_{i+\frac12}^n - q_{i-\frac12}^n} = \frac{\epsilon}{q_{i+\frac12}^n - q_{i-\frac12}^n}
\end{align}

as well as

\begin{align}
 \bar q_i^{n+1} - q_{i-\frac12}^{n+1} = \bar q_i^n - q_{i-\frac12}^{n}  - \cfl \alpha \left(  q_{i+\frac12}^n - q_{i-\frac12}^n  \right)  &\geq 0\\
 \frac{ \epsilon }{q_{i+\frac12}^n - q_{i-\frac12}^n} = \frac{ \bar q_i^n - q_{i-\frac12}^{n} }{q_{i+\frac12}^n - q_{i-\frac12}^n} &\geq \cfl \alpha 
\end{align}

One thus faces the situation that there is a \emph{unique} choice of $\alpha$ (and thus of the flux $\hat f_{i+\frac12}$), namely
\begin{align}
 \cfl \alpha = \frac{\epsilon}{q_{i+\frac12}^n - q_{i-\frac12}^n}
\end{align}
Having assumed $\cfl$ big (close to 1), $\alpha$ therefore needs to be chosen small. The full flux is
\begin{align}
 \hat f_{i+\frac12}/c &= \alpha q_{i+\frac12}^n + (\beta + \gamma) q_{i-\frac12}^n \\
 &= \frac{\epsilon}{\cfl(q_{i+\frac12}^n - q_{i-\frac12}^n)} q_{i+\frac12}^n + \left(1 - \frac{\epsilon}{\cfl(q_{i+\frac12}^n - q_{i-\frac12}^n)} \right) q_{i-\frac12}^n \\
 &= q_{i-\frac12}^n + \frac{\epsilon}{\cfl}
\end{align}

The uniqueness of the choice can be explained as follows (see also Figure \ref{fig:example1}, bottom): The flux on the left cell interface (at $x_{i-\frac12}$) is exact. The exact flux on the right cell interface is high for a time period $$\tau = \frac{a \Delta x}{c} = \frac{\epsilon \Delta x}{c( q_{i+\frac12}^n-q_{i-\frac12}^n)} = \Delta t \frac{\epsilon}{\cfl ( q_{i+\frac12}^n-q_{i-\frac12}^n) }$$
and then equals to the left flux for the rest of the time step. If the flux $\hat f_{i+\frac12}$ estimated via the quadrature is a little larger than the time-average of the exact flux, there will be an undershoot $\bar q_i^{n+1} < q_{i-\frac12}$. One thus does not want to put a large weight on the large value $q_{i+\frac12}^n$, such that naturally $\alpha$ should be small, and needs to vanish if $\epsilon \to 0$. If the flux $\hat f_{i+\frac12}$ is a little less than the time-average of the exact flux, then the new average $\bar q_i^{n+1}$ will not drop to $q_{i-\frac12}^{n+1} = q_{i-\frac12}^{n}$, while the right point value will do so. Therefore the fact that the flux $\hat f_{i+\frac12}$ has a unique value has to do with the fact that in the situation described above there is just one TVD value of the average. It is noteworthy that $\hat f_{i+\frac12}$ depends on the CFL number $\cfl$. Below, a modified flux will be computed not by modifying the quadrature weights, but by changing the half-time-step value $f(q_{i+\frac12}^{n+\frac12})$, which at least for linear advection is perfectly equivalent.

\subsection{CFL number is 1} \label{ssec:examplesnuone}

Consider now the case of a CFL number $\cfl = 1$, and for simplicity use $\beta = 0$, i.e. a quadrature similar to the trapezoidal rule. Then the reconstruction does not play a role, and
\begin{align}
 q_{i+\frac12}^{n+1} &= q_{i-\frac12}^{n} & q_{i-\frac12}^{n+1} &= q_{i-\frac32}^{n} 
\end{align}
such that
\begin{align}
 \bar q_i^{n+1} = \bar q_i^n - \left( \alpha_{i+\frac12} q_{i+\frac12}^n - \alpha_{i-\frac12} q_{i-\frac12}^n + \gamma_{i+\frac12} q_{i-\frac12}^n - \gamma_{i-\frac12} q_{i-\frac32}^n  \right)
\end{align}
Observe that here, for clarity of presentation, the quadrature weights are assumed to be different from cell interface to cell interface. Assume monotone data $q_{i-\frac12}^n \leq \bar q_i^n \leq q_{i+\frac12}^n$; monotonicity at the new time means
\begin{align}
 \bar q_i^{n+1} - q_{i-\frac12}^{n+1} = \bar q_i^n  - q_{i-\frac32}^{n} - \left( \alpha_{i+\frac12} q_{i+\frac12}^n - \alpha_{i-\frac12} q_{i-\frac12}^n + \gamma_{i+\frac12} q_{i-\frac12}^n - \gamma_{i-\frac12} q_{i-\frac32}^n  \right) \geq 0\\
 q_{i+\frac12}^{n+1} - \bar q_i^{n+1} = q_{i-\frac12}^{n} - \bar q_i^n  + \left( \alpha_{i+\frac12} q_{i+\frac12}^n - \alpha_{i-\frac12} q_{i-\frac12}^n + \gamma_{i+\frac12} q_{i-\frac12}^n - \gamma_{i-\frac12} q_{i-\frac32}^n  \right) \geq 0
\end{align}
Consider the special case $q_{i-\frac12}^n = q_{i-\frac32}^n = 0$:
\begin{align}
 \bar q_i^n  -  \alpha_{i+\frac12} q_{i+\frac12}^n   \geq 0\\
 - \bar q_i^n  + \alpha_{i+\frac12} q_{i+\frac12}^n  \geq 0
\end{align}
In this case, again, there is only one possible value for $\alpha_{i+\frac12}$:
\begin{align}
 \alpha_{i+\frac12} = \frac{\bar q_i^n}{q_{i+\frac12}^n }
\end{align}
since, whatever the point value, the entire mass $\Delta x \bar q_i^n$ flows out of the cell over the time step. Below, for practical reasons, a modified (limited) flux $\hat f_{i+\frac12}$ is obtained by choosing a different $f(q_{i+\frac12}^{n+\frac12})$ and not by modifying quadrature weights.

\section{A TVD Active Flux method} \label{sec:TVNI}

Consider the scalar conservation law \eqref{eq:conslaw}. Assume that the point value update \eqref{eq:updatepoints} amounts to an evaluation of the reconstruction at some location 
such that
\begin{align}
 q_{i+\frac12}^{n+1} &=q^n_{\text{recon}}\left(x_{i+\frac12} - \sigma^{n+1}_{i+\frac12} \Delta t \right ) = \begin{cases} q^n_{\text{recon},i}\left( \frac{\Delta x}{2} - \sigma^{n+1}_{i+\frac12}  \Delta t \right ) & \sigma_{i+\frac12}^{n+1} > 0 \\
                        q^n_{\text{recon},i+1}\left( -\frac{\Delta x}{2} - \sigma^{n+1}_{i+\frac12} \Delta t \right ) & \sigma_{i+\frac12}^{n+1} \leq 0
                       \end{cases} \label{eq:upwindevoop}\\
        &\overset{ \eqref{eq:definitionrho}}{=} \begin{cases} q_{i+\frac12}^n - (q_{i+\frac12}^n - q_{i-\frac12}^n) \rho_i^n\left( \frac{\Delta x}{2} - \sigma^{n+1}_{i+\frac12}  \Delta t \right ) & \sigma_{i+\frac12}^{n+1} > 0 \\
                        q_{i+\frac32}^n - (q_{i+\frac32}^n - q_{i+\frac12}^n) \rho_{i+1}^n\left( -\frac{\Delta x}{2} - \sigma^{n+1}_{i+\frac12}  \Delta t \right ) & \sigma_{i+\frac12}^{n+1} \leq 0
                       \end{cases}
\end{align}
where $\sigma^{n+1}_{i+\frac12}$ is an estimate of $f'(q)$ at $x_{i+\frac12}$ at $t^n + \Delta t$, e.g. as in \eqref{eq:updatepointsdetailedhighorder}. The last expression can be rewritten as
\begin{align}
 q_{i+\frac12}^n + (q_{i+\frac32}^n-q_{i+\frac12}^n)\left(1 -  \rho_{i+1}^n\left( -\frac{\Delta x}{2} - \sigma_{i+\frac12}  \Delta t \right )\right )
\end{align}

Write therefore generically
\begin{align}
 q_{i+\frac12}^{n+1} =: q_{i+\frac12}^n - \Delta q_{i+\frac12}^{n+1,\text{upw}} \rho_{i+\frac12}^{n+1} \label{eq:defofrho}
\end{align}
with 
\begin{align}
 \Delta q_{i+\frac12}^{n+1,\text{upw}} &:= \begin{cases} q_{i+\frac12}^n - q_{i-\frac12}^n& \sigma^{n+1}_{i+\frac12} > 0 \\ -(q_{i+\frac32}^n - q_{i+\frac12}^n) & \sigma^{n+1}_{i+\frac12} \leq 0 \end{cases} \label{eq:deltaqdef}\\
 &= (q_{i+\frac12}^n - q_{i-\frac12}^n) (\Sigma^{n+1}_{i+\frac12})^+ + (q_{i+\frac32}^n - q_{i+\frac12}^n) (\Sigma^{n+1}_{i+\frac12})^-
\end{align}
having defined $ (\Sigma^{n+s}_{i+\frac12})^\pm = (\sgn \, \sigma^{n+s}_{i+\frac12})^\pm \in \{ \pm1, 0 \}$
and some $\rho_{i+\frac12}^{n+1} \in [0,1]$ if the data are monotone and the reconstruction monotonized.

There is no analogous definition for $q_{i+\frac12}^{n+\frac12}$: define instead an auxiliary intermediate flux
\begin{align}
 f^{n+\frac12}_{i+\frac12} := f(q_{i+\frac12}^n) + b_{i+\frac12} (q_{i+\frac12}^{n+1} - q_{i+\frac12}^n) \label{eq:auxflux}
\end{align}
where $b_{i+\frac12}$ is a parameter to be determined in the limiting procedure.
The first term merely ensures that this is a consistent flux (if the data is constant it becomes the physical flux). The second term is chosen with the idea that $q_{i+\frac12}^n = q_{i+\frac12}^{n+1}$, at least for monotone data, implies that they are constant, which then is consistent with $f^{n+\frac12}_{i+\frac12} = f(q_{i+\frac12}^n)$. It thus makes sure that $b_{i+\frac12}$ is well-defined. One can, in principle, choose other ways of parametrizing the auxiliary flux $f_{i+\frac12}^{n+\frac12}$, but \eqref{eq:auxflux} seems to have some nice properties. Observe that using \eqref{eq:defofrho} the auxiliary flux is rewritten as
\begin{align}
 f^{n+\frac12}_{i+\frac12} := f(q_{i+\frac12}^n) - b_{i+\frac12}  \Delta q_{i+\frac12}^{n+1,\text{upw}} \rho_{i+\frac12}^{n+1} \label{eq:auxflux2}
\end{align}

The numerical flux $\hat f_{i+\frac12}$ is obtained via Simpson's rule
\begin{align}
 \hat f_{i+\frac12} &= \alpha f(q_{i+\frac12}^n) + \beta f^{n+\frac12}_{i+\frac12} + \gamma f(q_{i+\frac12}^{n+1}) \label{eq:intercellfluxquadrature}\\
 &= (\alpha  + \beta) f(q_{i+\frac12}^n) - \beta b_{i+\frac12}  \Delta q_{i+\frac12}^{n+1,\text{upw}} \rho_{i+\frac12}^{n+1}
 + \gamma f\Big(q_{i+\frac12}^n - \Delta q_{i+\frac12}^{n+1,\text{upw}} \rho_{i+\frac12}^{n+1} \Big)
\end{align}

In order to convert increments in flux into increments in $q$
define $a_i^n$ and $a_i^{n+1}$ as
\begin{align}
 f(q_{i+\frac12}^{n+s}) - f(q_{i-\frac12}^{n+s}) =: a_i^{n+s} (q_{i+\frac12}^{n+s}  - q_{i-\frac12}^{n+s} ) \qquad s = 0, 1
\end{align}
In case $q_{i+\frac12}^{n+s}  - q_{i-\frac12}^{n+s}$ vanishes, choose e.g. $a_i^{n+s} = f'(q_{i+\frac12}^{n+s})$. Observe that the definition \eqref{eq:auxflux} of the auxiliary flux is subtly different.
With these notions one has
\begin{align}
 \hat f_{i+\frac12} - \hat f_{i-\frac12} &= 
 ((\alpha + \beta)a_i^n  +  \gamma a_i^{n+1})              (q_{i+\frac12}^n - q_{i-\frac12}^n)
 \nonumber\\ 
  &+ \beta \Big(
 -b_{i+\frac12}  \Delta q_{i+\frac12}^{n+1,\text{upw}} \rho_{i+\frac12}^{n+1} + b_{i-\frac12}  \Delta q_{i-\frac12}^{n+1,\text{upw}} \rho_{i-\frac12}^{n+1}
 \Big) \nonumber\\ 
  &+ \gamma a_i^{n+1} \Big( - \Delta q_{i+\frac12}^{n+1,\text{upw}} \rho_{i+\frac12}^{n+1}  + \Delta q_{i-\frac12}^{n+1,\text{upw}} \rho_{i-\frac12}^{n+1} \Big)
\end{align}

Thus, using $\lambda := \frac{\Delta t}{\Delta x}$ and calling $m_i := (\alpha + \beta)a_i^n  +  \gamma a_i^{n+1}$,
{\footnotesize
 \begin{align}
  &q_{i+\frac12}^{n+1} - \bar q_i^{n+1} = q_{i+\frac12}^n -\bar q_i^n + \lambda m_i (q_{i+\frac12}^n - q_{i-\frac12}^n) 
  \\ \nonumber &   \qquad 
  -\big(1 + \lambda (\beta b_{i+\frac12} + \gamma a_i^{n+1})\big) \Delta q_{i+\frac12}^{n+1,\text{upw}} \rho_{i+\frac12}^{n+1}
  + \lambda (\beta b_{i-\frac12} + \gamma a_i^{n+1}) \Delta q_{i-\frac12}^{n+1,\text{upw}} \rho_{i-\frac12}^{n+1}  \\
  &= q_{i+\frac12}^n -\bar q_i^n + \lambda m_i (q_{i+\frac12}^n - q_{i-\frac12}^n) 
  \\ \nonumber &   \qquad 
  -\big(1 + \lambda (\beta b_{i+\frac12} + \gamma a_i^{n+1})\big) \Big( (q_{i+\frac12}^n - q_{i-\frac12}^n) (\Sigma^{n+1}_{i+\frac12})^+ + (q_{i+\frac32}^n - q_{i+\frac12}^n) (\Sigma^{n+1}_{i+\frac12})^- \Big) \rho_{i+\frac12}^{n+1}
  \\ \nonumber &   \qquad 
  + \lambda (\beta b_{i-\frac12} + \gamma a_i^{n+1}) \Big( (q_{i-\frac12}^n - q_{i-\frac32}^n) (\Sigma^{n+1}_{i-\frac12})^+ + (q_{i+\frac12}^n - q_{i-\frac12}^n) (\Sigma^{n+1}_{i-\frac12})^-  \Big) \rho_{i-\frac12}^{n+1}
  \\
  &= q_{i+\frac12}^n -\bar q_i^n  -\big(1 + \lambda (\beta b_{i+\frac12} + \gamma a_i^{n+1})\big)  (q_{i+\frac32}^n - q_{i+\frac12}^n) (\Sigma^{n+1}_{i+\frac12})^-  \rho_{i+\frac12}^{n+1}
  \\ \nonumber &\qquad 
  + \Big( \lambda m_i  -\big(1 + \lambda (\beta b_{i+\frac12} + \gamma a_i^{n+1})\big)   (\Sigma^{n+1}_{i+\frac12})^+  \rho_{i+\frac12}^{n+1}+ \lambda (\beta b_{i-\frac12} + \gamma a_i^{n+1})  (\Sigma^{n+1}_{i-\frac12})^-   \rho_{i-\frac12}^{n+1} \Big) (q_{i+\frac12}^n - q_{i-\frac12}^n) 
  \\ \nonumber &   \qquad 
  + \lambda (\beta b_{i-\frac12} + \gamma a_i^{n+1})  (q_{i-\frac12}^n - q_{i-\frac32}^n) (\Sigma^{n+1}_{i-\frac12})^+  \rho_{i-\frac12}^{n+1}
 \end{align}}
as well as
{\footnotesize
\begin{align}
 &\bar q_i^{n+1} - q_{i-\frac12}^{n+1} = \bar q_i^n - q_{i-\frac12}^n - \lambda m_i (q_{i+\frac12}^n - q_{i-\frac12}^n)
 + \Delta q_{i-\frac12}^{n+1,\text{upw}} \rho_{i-\frac12}^{n+1}   
 \\ \nonumber &\qquad
 + \lambda( \beta b_{i+\frac12}  + \gamma a_i^{n+1} )\Delta q_{i+\frac12}^{n+1,\text{upw}} \rho_{i+\frac12}^{n+1}
 - \lambda(\beta b_{i-\frac12} + \gamma a_i^{n+1}) \Delta q_{i-\frac12}^{n+1,\text{upw}} \rho_{i-\frac12}^{n+1} \\
 &= \bar q_i^n - q_{i-\frac12}^n - \lambda m_i (q_{i+\frac12}^n - q_{i-\frac12}^n)
 \\ \nonumber &\qquad
 + \lambda( \beta b_{i+\frac12}  + \gamma a_i^{n+1} )\Big ( (q_{i+\frac12}^n - q_{i-\frac12}^n) (\Sigma^{n+1}_{i+\frac12})^+ + (q_{i+\frac32}^n - q_{i+\frac12}^n) (\Sigma^{n+1}_{i+\frac12})^- \Big ) \rho_{i+\frac12}^{n+1}
 \\ \nonumber &\qquad
 + (1 - \lambda(\beta b_{i-\frac12} + \gamma a_i^{n+1})) \Big( (q_{i-\frac12}^n - q_{i-\frac32}^n) (\Sigma^{n+1}_{i-\frac12})^+ + (q_{i+\frac12}^n - q_{i-\frac12}^n) (\Sigma^{n+1}_{i-\frac12})^-  \Big) \rho_{i-\frac12}^{n+1} 
 \\
 &= \bar q_i^n - q_{i-\frac12}^n + \lambda( \beta b_{i+\frac12}  + \gamma a_i^{n+1} ) (q_{i+\frac32}^n - q_{i+\frac12}^n) (\Sigma^{n+1}_{i+\frac12})^-  \rho_{i+\frac12}^{n+1}
 \\ \nonumber &\qquad
 + \Big ( - \lambda m_i 
 + \lambda( \beta b_{i+\frac12}  + \gamma a_i^{n+1} )  (\Sigma^{n+1}_{i+\frac12})^+  \rho_{i+\frac12}^{n+1}
 + (1 - \lambda(\beta b_{i-\frac12} + \gamma a_i^{n+1}))   (\Sigma^{n+1}_{i-\frac12})^-  \rho_{i-\frac12}^{n+1} \Big) (q_{i+\frac12}^n - q_{i-\frac12}^n)
 \\ \nonumber &\qquad
  +(1 - \lambda(\beta b_{i-\frac12} + \gamma a_i^{n+1}))  (q_{i-\frac12}^n - q_{i-\frac32}^n) (\Sigma^{n+1}_{i-\frac12})^+  \rho_{i-\frac12}^{n+1}
\end{align} }

From Lemma \ref{lem:harten} one thus concludes the conditions sufficient for TV stability:
{\footnotesize
\begin{align}
  0 &\leq R_i^n + \Big( \lambda m_i  -\big(1 + \lambda (\beta b_{i+\frac12} + \gamma a_i^{n+1})\big)   (\Sigma^{n+1}_{i+\frac12})^+  \rho_{i+\frac12}^{n+1}+ \lambda (\beta b_{i-\frac12} + \gamma a_i^{n+1})  (\Sigma^{n+1}_{i-\frac12})^-   \rho_{i-\frac12}^{n+1} \Big)( R_i^n  + 1) \\
   0 &\leq -\big(1 + \lambda (\beta b_{i+\frac12} + \gamma a_i^{n+1})\big)  (\Sigma^{n+1}_{i+\frac12})^-  \rho_{i+\frac12}^{n+1} (R_{i+1}^n + 1) \label{eq:inequality2} \\
   0 &\leq \lambda (\beta b_{i-\frac12} + \gamma a_i^{n+1})   (\Sigma^{n+1}_{i-\frac12})^+  \rho_{i-\frac12}^{n+1} (R_{i-1}^n + 1) \\  
  0 &\leq 1 + \Big(- \lambda m_i 
 + \lambda( \beta b_{i+\frac12}  + \gamma a_i^{n+1} )  (\Sigma^{n+1}_{i+\frac12})^+  \rho_{i+\frac12}^{n+1}
 + (1 - \lambda(\beta b_{i-\frac12} + \gamma a_i^{n+1}))   (\Sigma^{n+1}_{i-\frac12})^-  \rho_{i-\frac12}^{n+1}\Big) (R_i^n + 1)  \\
  0 &\leq \Big(\lambda( \beta b_{i+\frac12}  + \gamma a_i^{n+1} )  (\Sigma^{n+1}_{i+\frac12})^-  \rho_{i+\frac12}^{n+1} \Big) (R_{i+1}^n + 1) \label{eq:inequality5} \\
  0 &\leq \Big( (1 - \lambda(\beta b_{i-\frac12} + \gamma a_i^{n+1}))  (\Sigma^{n+1}_{i-\frac12})^+  \rho_{i-\frac12}^{n+1} \Big) (R_{i-1}^n + 1) 
 \end{align}}
as well as \eqref{eq:consistencycondlemma}, which vanishes identically: {\footnotesize
 \begin{align}
   \Big(  -\big(1 + \lambda (\beta b_{i+\frac12} + \gamma a_i^{n+1})\big)    
   + \lambda (\beta b_{i+\frac12} + \gamma a_{i+1}^{n+1})    
 + \lambda( \beta b_{i+\frac12}  + \gamma a_i^{n+1} )  
 + (1 - \lambda(\beta b_{i+\frac12} + \gamma a_{i+1}^{n+1})) \Big) (\Sigma^{n+1}_{i+\frac12})^+  \rho_{i+\frac12}^{n+1} 
\\\nonumber 
  + \Big(  -\big(1 + \lambda (\beta b_{i-\frac12} + \gamma a_{i-1}^{n+1})\big)   
   + \lambda (\beta b_{i-\frac12} + \gamma a_i^{n+1})    
   + \lambda( \beta b_{i-\frac12}  + \gamma a_{i-1}^{n+1} )  
 + (1 - \lambda(\beta b_{i-\frac12} + \gamma a_i^{n+1})) \Big)  (\Sigma^{n+1}_{i-\frac12})^- \rho_{i-\frac12}^{n+1} 
 =  0
 \end{align}

 }

For uniformly positive speeds inequalities \eqref{eq:inequality2} and \eqref{eq:inequality5} are trivial, and the remaining ones read

\begin{align}
  0 &\leq R_i^n + \Big( \lambda m_i  -\big(1 + \lambda (\beta b_{i+\frac12} + \gamma a_i^{n+1})\big)    \rho_{i+\frac12}^{n+1} \Big)( R_i^n  + 1) \label{eq:posspeeddirectinequalities1}\\
    0 &\leq \lambda (\beta b_{i-\frac12} + \gamma a_i^{n+1})    \rho_{i-\frac12}^{n+1} (R_{i-1}^n + 1) \\  
  0 &\leq 1 + \Big(- \lambda m_i 
 + \lambda( \beta b_{i+\frac12}  + \gamma a_i^{n+1} )    \rho_{i+\frac12}^{n+1}
 \Big) (R_i^n + 1)  \\
   0 &\leq  (1 - \lambda(\beta b_{i-\frac12} + \gamma a_i^{n+1}))   \rho_{i-\frac12}^{n+1}  (R_{i-1}^n + 1) \label{eq:posspeeddirectinequalities4}
 \end{align}

which can be summarized as the following inequality for the combination $\beta b_{i+\frac12} \rho_{i+\frac12}^{n+1}$: 
{\footnotesize
\begin{align}
    & \max \left( -\gamma a_{i+1}^{n+1}\rho_{i+\frac12}^{n+1},    m_i-\frac{1}{\lambda(R_i^n + 1)}    -  \rho_{i+\frac12}^{n+1}\gamma a_i^{n+1}    \right) \label{eq:allconditionstogether} 
    \leq 
    \beta b_{i+\frac12} \rho_{i+\frac12}^{n+1}    \leq \qquad \qquad \qquad \\ \nonumber &  \qquad \qquad \qquad \qquad \qquad \qquad \min\left( m_i +\frac{R_i^n}{\lambda(R_i^n  + 1)}  - \frac{1 + \lambda \gamma a_i^{n+1} }{\lambda}    \rho_{i+\frac12}^{n+1} , \frac{1  -  \lambda\gamma a_{i+1}^{n+1}}{\lambda} \rho_{i+\frac12}^{n+1}    \right)
 \end{align}}

 \begin{theorem}
  Consider linear advection \eqref{eq:linadv} with $\cfl := c \lambda > 0$. Then Active Flux is TVD provided that the cell interface flux $\hat f_{i+\frac12}= c q_{i+\frac12}^{n} 
 - s_{i+\frac12} c(q_{i+\frac12}^{n} - q_{i-\frac12}^{n})$ is chosen such that
  \begin{align}
    s_\text{min} := \max \left( 0,    1-\frac{1}{\cfl(R_i^n + 1)}      \right)  &\leq 
    s_{i+\frac12}  \leq  \min\left( 1 +\frac{R_i^n}{\cfl(R_i^n  + 1)}  - \frac{ \rho_{i+\frac12}^{n+1}   }{\cfl}   , \frac{\rho_{i+\frac12}^{n+1}  }{\cfl}     \right)=:s_\text{max} \label{eq:ineqialitylinear}
 \end{align}
 and for monotone data this interval is non-empty if $\rho_{i+\frac12}^{n+1}$ is given by the power law \eqref{eq:powerlaw}, and otherwise by the parabola \eqref{eq:parabolarho}.
 \end{theorem}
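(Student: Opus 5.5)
The plan is to specialize the general sufficient condition \eqref{eq:allconditionstogether} to linear advection. Afterwards, nonemptiness of the interval reduces to two elementary inequalities on the profile $\rho_i^n$. For $f(q)=cq$ one has $a_i^n=a_i^{n+1}=m_i=c$ for all $i$, and $\sigma^{n+1}_{i+\frac12}=c>0$. Hence $(\Sigma^{n+1}_{i+\frac12})^+=1$ and $(\Sigma^{n+1}_{i+\frac12})^-=0$, so the reduced inequalities \eqref{eq:posspeeddirectinequalities1}--\eqref{eq:posspeeddirectinequalities4} apply. Using $\alpha+\beta+\gamma=1$, \eqref{eq:defofrho} and \eqref{eq:auxflux2}, the flux \eqref{eq:intercellfluxquadrature} becomes
\begin{align*}
\hat f_{i+\frac12}=c\,q_{i+\frac12}^n-(\beta b_{i+\frac12}+\gamma c)\,\rho_{i+\frac12}^{n+1}(q_{i+\frac12}^n-q_{i-\frac12}^n),
\end{align*}
so that $c\, s_{i+\frac12}=\beta b_{i+\frac12}\rho^{n+1}_{i+\frac12}+\gamma c\,\rho^{n+1}_{i+\frac12}$. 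Since $\rho^{n+1}_{i+\frac12}$ is determined by the point update alone, any $s_{i+\frac12}$ can be realized by a choice of $b_{i+\frac12}$ whenever $\rho^{n+1}_{i+\frac12}\neq 0$. If $\rho^{n+1}_{i+\frac12}=0$, then $s_\text{max}\le 0$ forces $s=0$, and this is attained. Substituting into \eqref{eq:allconditionstogether}, adding $\gamma\rho^{n+1}_{i+\frac12}$ to all sides and dividing by $c$ yields exactly \eqref{eq:ineqialitylinear}; the $\gamma$-terms cancel in each entry of the max and the min. Hence the conditions of Lemma \ref{lem:harten} hold, and the scheme is TVD. The conditions coming from cells $i$ and $i+1$ both constrain only interface $i+\frac12$ and involve only $R_i^n$, so they can be satisfied interface by interface.

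For nonemptiness I would compare the four pairs of bounds, writing $\rho:=\rho^{n+1}_{i+\frac12}$ and $R:=R_i^n$. The comparison $0\le\rho/\cfl$ is trivial because $\rho\in[0,1]$. The comparison $1-\frac{1}{\cfl(R+1)}\le 1+\frac{R}{\cfl(R+1)}-\frac{\rho}{\cfl}$ is equivalent to $\rho\le1$. The two remaining ones are
\begin{align*}
\rho\le \cfl+\frac{R}{R+1}, \qquad \rho\ge \cfl-\frac{1}{R+1}.
\end{align*}
Change variables $y=\frac12-\frac{x}{\Delta x}\in[0,1]$ and write $r(y)$ for $\rho_i^n$ in this variable. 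Then $r(0)=0$, $r(1)=1$, $\int_0^1 r\,\dd y=\frac{R}{R+1}$, and $\rho=r(\cfl)$ by \eqref{eq:upwindevoop} (with the CFL condition $\cfl\le1$). The required inequalities become $r(\cfl)-\cfl\le\int_0^1 r$ and $\cfl-r(\cfl)\le\int_0^1(1-r)$.

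The remaining work is to verify these two inequalities case by case.
\begin{itemize}
\item Parabola, $\frac12\le R\le2$: from \eqref{eq:parabolarho}, $r(y)=y+\kappa y(1-y)$ with $\kappa=\frac{3(R-1)}{R+1}\in[-1,1]$, fixed by $\int_0^1 r=\frac12+\frac{\kappa}{6}$. Then $|r-y|\le|\kappa|/4$, and $\frac{|\kappa|}{4}\le\frac12\pm\frac{\kappa}{6}$ holds because $|\kappa|\le 1$.
\item Power law, $R>2$: $r=1-(1-y)^R$ is concave with $r\ge y$, so the lower inequality is immediate. For the upper one, the maximum of $r-y$ is attained where $(1-y)^{R-1}=1/R$. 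There its value is $(1-y)(1-1/R)\le 1-\frac1R\le\frac{R}{R+1}$.
\item Power law, $R<\frac12$: this follows by the symmetry $y\mapsto 1-y$, $r\mapsto 1-r$, $R\mapsto 1/R$, which exchanges the two inequalities.
\end{itemize}
I expect the main care to be needed in the bookkeeping of the first step, namely that the $\gamma$-terms and the shifted indices in \eqref{eq:allconditionstogether} collapse exactly to \eqref{eq:ineqialitylinear}. The power-law maximization is the only genuinely nonlinear estimate.
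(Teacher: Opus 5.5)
Your first step is the paper's own argument. With $a_i^n=a_i^{n+1}=m_i=c$, the $\gamma$-terms in \eqref{eq:allconditionstogether} cancel after adding $\gamma c\,\rho_{i+\frac12}^{n+1}$, and $c\,s_{i+\frac12}=(\beta b_{i+\frac12}+\gamma c)\rho_{i+\frac12}^{n+1}$. You could add that \eqref{eq:consistencycondlemma} holds identically, as shown before the theorem.

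The genuine difference is in the non-emptiness claim. The paper settles it by plotting $s_\text{max}-s_\text{min}$ over $(R_i^n,\cfl)$ and calling the detailed proof tedious. In closed form it checks only the regimes $R_i^n<\frac12,\ \cfl<\frac12$ and $R_i^n>2,\ \cfl>\frac13$. These reduce to $0\le\rho/\cfl$ and $\rho\le 1$, which are exactly your two trivial comparisons.

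You instead isolate the two nontrivial comparisons. Through $y=\frac12-\frac{x}{\Delta x}$ and $\rho_{i+\frac12}^{n+1}=r(\cfl)$, you recast them as $r(\cfl)-\cfl\le\int_0^1 r\,\dd y$ and $\cfl-r(\cfl)\le\int_0^1(1-r)\,\dd y$. In words: the deviation of the profile from the linear one at the foot of the characteristic must not exceed the mass on the corresponding side. Your verifications are correct:
\begin{itemize}
\item For the parabola, $r=y+\kappa y(1-y)$ with $|\kappa|\le 1$ only needs $\frac14+\frac16\le\frac12$.
\item For $R>2$, concavity gives $r\ge y$ and $\max_y(r-y)\le 1-\frac1R\le\frac{R}{R+1}$.
\item For $R<\frac12$, the reflection $y\mapsto 1-y$, $r\mapsto 1-r$, $R\mapsto 1/R$ turns $y^{1/R}$ into $1-(1-y)^{1/R}$ and swaps the two inequalities. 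This is legitimate because your $R>2$ bounds hold for every $y\in[0,1]$.
\end{itemize}

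Your route gives a complete, figure-free proof for all $R_i^n\in(0,\infty)$ and $\cfl\in(0,1]$. It also identifies exactly which property of a monotonized reconstruction is needed, so it applies to other monotone profiles too. The paper's route offers numerical evidence plus spot checks of the easy comparisons, though its plot does show at a glance how much slack the interval leaves.
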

  \begin{proof}
  One has $m_i = c$, and thus inequality \eqref{eq:allconditionstogether} reduces to
  \begin{align}
    & \max \left( 0,    1-\frac{1}{\cfl(R_i^n + 1)}      \right)  &\leq 
    \left(\beta \frac{b_{i+\frac12}}{c}  +  \gamma \right )\rho_{i+\frac12}^{n+1}  \leq  \min\left( 1 +\frac{R_i^n}{\cfl(R_i^n  + 1)}  - \frac{ \rho_{i+\frac12}^{n+1}   }{\cfl}   , \frac{\rho_{i+\frac12}^{n+1}  }{\cfl}     \right)
 \end{align}
 With $c > 0$ one finds from \eqref{eq:deltaqdef} that $ \Delta q_{i+\frac12}^{n+1,\text{upw}} :=  q_{i+\frac12}^n - q_{i-\frac12}^n$ and hence from \eqref{eq:auxflux2} that
 \begin{align}
 \frac{f^{n+\frac12}_{i+\frac12}}{c} = q_{i+\frac12}^n -\frac{ b_{i+\frac12}}{c} \rho_{i+\frac12}^{n+1} (q_{i+\frac12}^n - q_{i-\frac12}^n)
\end{align}
and, with \eqref{eq:defofrho}
\begin{align}
 \hat f_{i+\frac12}&= \alpha c q_{i+\frac12}^n + \beta f^{n+\frac12}_{i+\frac12} + \gamma c q_{i+\frac12}^{n+1} \\
 &=  c q_{i+\frac12}^{n} 
 - \underbrace{\left( \beta \frac{b_{i+\frac12}}{c} + \gamma  \right)}_{=: s_{i+\frac12}} \rho_{i+\frac12}^{n+1}c(q_{i+\frac12}^{n} - q_{i-\frac12}^{n})
\end{align}
Concerning the fact that the interval defining $s_{i+\frac12}$ is non-empty, Figure \ref{fig:choiceinequality} shows $s_\text{max} - s_\text{min}$ as a function of $R_i^n \in (0, \infty)$ and $\cfl \in (0,1]$. The detailed proof is tedious, but the Figure shows that the interesting situations only really occur for small and large $R_i^n$. For example, for $R_i^n < \frac12$ and $\nu < \frac12$ the inequality $s_\text{min} \leq s_\text{max}$ reduces to $0 \leq \cfl^{\frac1{R_i^n} - 1}$ which is always true. For large $R_i^n$ (e.g. for $R_i^n > 2$, $\cfl > \frac13$) the inequality reads
\begin{align}
 1 - \frac{1}{\cfl(R_i^n + 1)} \leq 1 + \frac{R_i^n}{\cfl(R_i^n +1)} - \frac{1 - (1-\cfl)^{R_i^n}}{\cfl}
\end{align}
which reduces to $0 \leq \frac{(1  - \cfl)^{R_i^n}}{\cfl}$, that also is always true.

\begin{figure}
 \centering
 \includegraphics[width=0.7\textwidth]{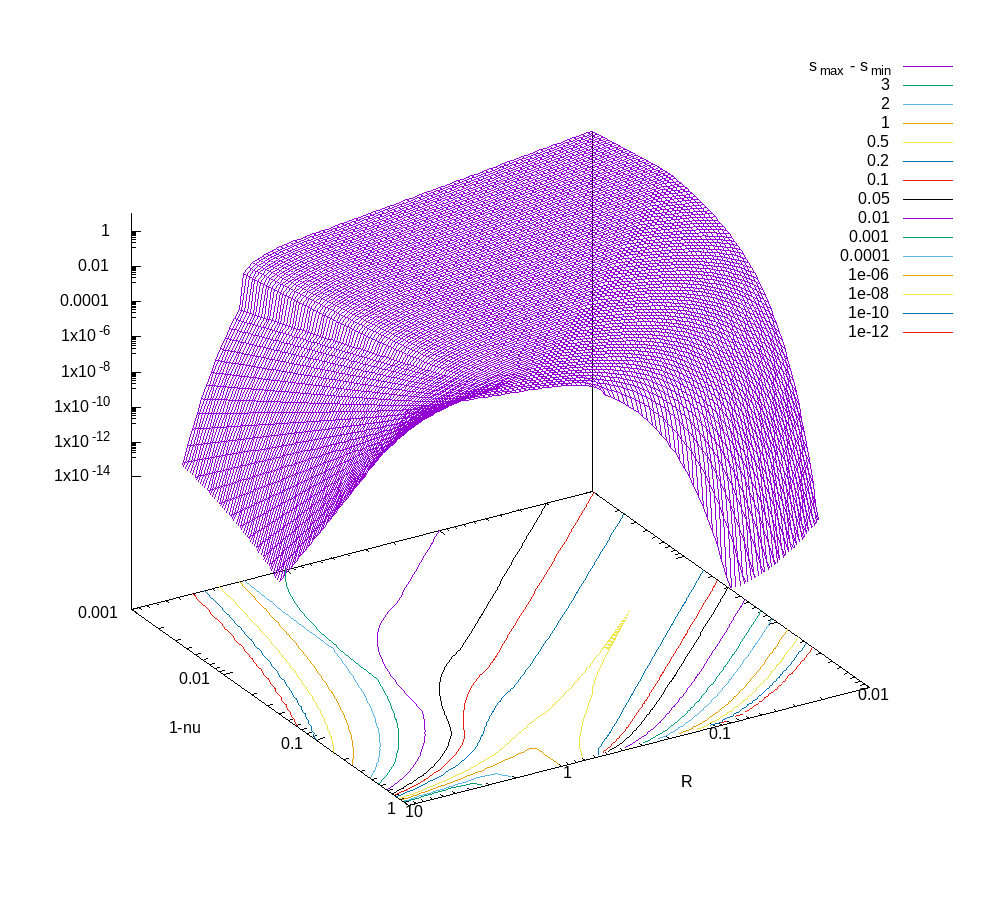}
 \caption{The difference between the upper and lower bound in \eqref{eq:ineqialitylinear} is always non-negative.}
 \label{fig:choiceinequality}
\end{figure}
 \end{proof}

For linear advection, limiting the flux at half the time step therefore is really rather simple: If one finds the default value obtained using the original Active Flux algorithm (applied to a monotonized reconstruction) to be outside the bounds of $\eqref{eq:ineqialitylinear}$, one simply can take any value inside these bounds instead.
For nonlinear equations the situation is slightly more complicated and in certain cases one needs to modify the new point value $q_{i+\frac12}^{n+1}$, or equivalently, $\rho_{i+\frac12}^{n+1}$ in order for the interval in \eqref{eq:allconditionstogether} to be non-empty, as is discussed next.

\begin{theorem} \label{thm:sufficient}
 Assume uniformly positive speeds in \eqref{eq:allconditionstogether} and assume that there exist local bounds $a_{i,\text{max}}, a_{i,\text{min}}$
  that ensure $a_{i,\text{min}} \leq   a_{i+1}^{n+1}, a_i^n, a_i^{n+1}  \leq a_{i,\text{max}}$. Assume also the CFL condition $\lambda a_{i,\text{max}} \leq \frac{\sqrt{97}-7}{4} \simeq 0.712$ (which is the positive root of $2x^2 + 7x - 6$), take $\gamma = \frac16$ and $R_i^n \geq 0$.
 Then it is sufficient for $\rho_{i+\frac12}^{n+1}$ to fulfill
 \begin{align}
  \max\left(0, \frac{\lambda a_{i,\text{max}} -\frac{1}{R_i^n + 1}}{1 - \lambda \gamma a_{i,\text{max}} }   \right)   \leq    \rho_{i+\frac12}^{n+1}  \leq 
   \min\left(1,  \frac{  \lambda a_{i,\text{min}} + \frac{R_i^n}{R_i^n  + 1} }{1 + \lambda \gamma a_{i,\text{max}} }\right)  \label{eq:suffconditionrhonp1}
 \end{align}
 in order for the interval in \eqref{eq:allconditionstogether} to be non-empty, and the interval in \eqref{eq:suffconditionrhonp1} is non-empty itself.
\end{theorem}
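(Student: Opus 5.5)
The plan is to verify directly that the lower bound of \eqref{eq:allconditionstogether} does not exceed its upper bound. Both bounds are a max and a min of two expressions, so this amounts to four pairwise inequalities, each to be shown under \eqref{eq:suffconditionrhonp1}. Write $\rho := \rho_{i+\frac12}^{n+1}$. Throughout I use three facts. First, uniformly positive speeds give $a_{i,\text{min}} \geq 0$, and hence all of $a_i^n, a_i^{n+1}, a_{i+1}^{n+1}$ are non-negative. Second, since $\alpha+\beta+\gamma = 1$ with positive weights, $m_i = (\alpha+\beta)a_i^n + \gamma a_i^{n+1}$ is a convex combination, so $a_{i,\text{min}} \le m_i \le a_{i,\text{max}}$. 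Third, the CFL assumption implies $1-\lambda\gamma a_{i,\text{max}} > 0$, so dividing by it preserves inequalities.

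The four comparisons are as follows.
\begin{enumerate}[(i)]
\item $-\gamma a_{i+1}^{n+1}\rho \le \frac{1-\lambda\gamma a_{i+1}^{n+1}}{\lambda}\rho$. This is equivalent to $\rho \ge 0$, which holds.
\item $m_i - \frac{1}{\lambda(R_i^n+1)} - \rho\gamma a_i^{n+1} \le m_i + \frac{R_i^n}{\lambda(R_i^n+1)} - \frac{1+\lambda\gamma a_i^{n+1}}{\lambda}\rho$. After cancellation this is $\rho \le 1$.
\item $-\gamma a_{i+1}^{n+1}\rho \le m_i + \frac{R_i^n}{\lambda(R_i^n+1)} - \frac{1+\lambda\gamma a_i^{n+1}}{\lambda}\rho$. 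This becomes $\rho\bigl(1+\lambda\gamma(a_i^{n+1}-a_{i+1}^{n+1})\bigr) \le \lambda m_i + \frac{R_i^n}{R_i^n+1}$. I bound the left-hand side above by $\rho(1+\lambda\gamma a_{i,\text{max}})$, using $a_{i+1}^{n+1}\ge 0$, and the right-hand side below by $\lambda a_{i,\text{min}} + \frac{R_i^n}{R_i^n+1}$. The upper bound in \eqref{eq:suffconditionrhonp1} then suffices.
\item $m_i - \frac{1}{\lambda(R_i^n+1)} - \rho\gamma a_i^{n+1} \le \frac{1-\lambda\gamma a_{i+1}^{n+1}}{\lambda}\rho$. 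This becomes $\lambda m_i - \frac{1}{R_i^n+1} \le \rho\bigl(1 + \lambda\gamma(a_i^{n+1} - a_{i+1}^{n+1})\bigr)$. I bound the left-hand side above by $\lambda a_{i,\text{max}} - \frac{1}{R_i^n+1}$. I bound the bracket below by $1-\lambda\gamma a_{i,\text{max}}$, using $a_i^{n+1}\ge 0$ and $a_{i+1}^{n+1} \le a_{i,\text{max}}$. The lower bound in \eqref{eq:suffconditionrhonp1} then suffices.
\end{enumerate}
Together these show that \eqref{eq:suffconditionrhonp1} implies that the interval in \eqref{eq:allconditionstogether} is non-empty.

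It remains to show that the interval \eqref{eq:suffconditionrhonp1} is itself non-empty. Set $x := \lambda a_{i,\text{max}}$ and $r := \frac{1}{R_i^n+1} \in (0,1]$, which is valid since $R_i^n \ge 0$. The trivial comparisons are quick.
\begin{itemize}
\item $0 \le 1$ holds.
\item $0$ is below the second upper entry, since that entry is non-negative.
\item The lower entry $\frac{x-r}{1-\gamma x}$ is at most $1$, because $x(1+\gamma) \le \frac76 \cdot 0.712 < 1$.
\end{itemize}
The essential comparison is
\begin{align*}
(x-r)(1+\gamma x) \le \Bigl(\lambda a_{i,\text{min}} + 1 - r\Bigr)(1-\gamma x).
\end{align*}
The worst case is $a_{i,\text{min}} = 0$. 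With $\gamma = \frac16$, expanding reduces this to $x + \frac{x^2}{6} + \frac{x}{6} - \frac{rx}{3} \le 1$. The left-hand side is largest as $r \to 0$, which leaves $x^2 + 7x - 6 \le 0$.

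This last polynomial inequality is where I expect the only real care to be needed. My reduction gives the root $\frac{\sqrt{73}-7}{2} \approx 0.77$, while the stated CFL bound is the positive root of $2x^2+7x-6$, approximately $0.712$. The stated bound is therefore stronger than what I need: at $x \approx 0.712$ one has $x^2 + 7x - 6 \approx -0.51 < 0$, so the claim follows. The discrepancy suggests the author bounds the terms $\lambda\gamma(a_i^{n+1}-a_{i+1}^{n+1})$ more crudely, for instance by $\pm 2\lambda\gamma a_{i,\text{max}}$ without using non-negativity. I would double-check the sign bookkeeping at this step. I would also note that if non-negativity of the $a$'s were not used, the cruder estimate should reproduce exactly $2x^2+7x-6 \le 0$.
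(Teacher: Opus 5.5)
Your proof is correct and follows the paper's route. You reduce the non-emptiness of \eqref{eq:allconditionstogether} to $0\le\rho\le 1$ together with the two-sided condition $\lambda m_i-\frac{1}{R_i^n+1}\le\rho\bigl(1-\lambda\gamma(a_{i+1}^{n+1}-a_i^{n+1})\bigr)\le\lambda m_i+\frac{R_i^n}{R_i^n+1}$, bound the $a$'s using $a_{i,\text{min}}\ge 0$ and $a_{i,\text{max}}$, and then cross-multiply to check that \eqref{eq:suffconditionrhonp1} is non-empty; your final estimate is in fact sharper than the paper's, and the discrepancy you noticed does not come from a cruder bound on $a_i^{n+1}-a_{i+1}^{n+1}$ (your guesses about $\pm 2\lambda\gamma a_{i,\text{max}}$ and about ``exactly reproducing'' $2x^2+7x-6$ are both off). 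Rather, with $N=\lambda a_{i,\text{max}}$ (your $x$), the paper writes the key condition as $0\le\lambda a_{i,\text{min}}+1-N-\gamma N\bigl(\lambda a_{i,\text{min}}+\frac{R_i^n-1}{R_i^n+1}+N\bigr)$, bounds the bracket by $2N+1$ (replacing $\lambda a_{i,\text{min}}$ by $N$ inside it) and drops the standalone $\lambda a_{i,\text{min}}\ge 0$, which costs an extra $\gamma N^2$ and yields $2N^2+7N-6\le 0$; your grouping $\lambda a_{i,\text{min}}(1-\gamma N)\ge 0$ correctly shows that $x\le\frac{\sqrt{73}-7}{2}\approx 0.772$ would already suffice.
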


\begin{proof}
Use first the standard CFL condition $\lambda a_{i,\text{max}} \leq 1$ only.
The conditions for the interval in \eqref{eq:allconditionstogether} to be non-empty are
\begin{align}
 -\gamma a_{i+1}^{n+1}\rho_{i+\frac12}^{n+1} &\leq \left( \frac{1}{\lambda}  -  \gamma a_{i+1}^{n+1}\right) \rho_{i+\frac12}^{n+1} \\
 m_i-\frac{1}{\lambda(R_i^n + 1)}    -  \rho_{i+\frac12}^{n+1}\gamma a_i^{n+1}   &\leq m_i +\frac{R_i^n}{\lambda(R_i^n  + 1)}  - \frac{1 + \lambda \gamma a_i^{n+1} }{\lambda}    \rho_{i+\frac12}^{n+1}
\end{align}
which reduce to
\begin{align}
 0 &\leq \rho_{i+\frac12}^{n+1} & 
   0  &\leq  1 - \rho_{i+\frac12}^{n+1} \label{eq:suffcondineq1}
\end{align}
and also
\begin{align}
  \lambda m_i-\frac{1}{R_i^n + 1}      &\leq   \left(1 - \lambda \gamma( a_{i+1}^{n+1} - a_i^{n+1} )  \right) \rho_{i+\frac12}^{n+1}  \leq 
     \lambda m_i + \frac{R_i^n}{R_i^n  + 1}   \label{eq:suffcondineq2}
 \end{align}
 
Observe first that $a_{i+1}^{n+1} - a_i^{n+1} \leq a_{i,\text{max}}-a_{i,\text{min}}$
 and thus
 \begin{align}
  1 - \lambda \gamma( a_{i+1}^{n+1} - a_i^{n+1} ) \geq  1 - \lambda \gamma( a_{i,\text{max}}-a_{i,\text{min}} ) \geq 1 - \lambda \gamma a_{i,\text{max}} > 0
 \end{align}
 and recall that $m_i = (\alpha + \beta)a_i^n  +  \gamma a_i^{n+1}$.
  Inequalities \eqref{eq:suffcondineq1}--\eqref{eq:suffcondineq2} define a common interval for $\rho_{i+\frac12}^{n+1}$ which is non-empty, because (observe that the expression is deliberately \emph{not} $\lambda m_i$)
 \begin{align}
 \lambda (\alpha + \beta)a_i^n +\lambda \gamma a_{i+1}^{n+1} \leq \lambda a_{i,\text{max}} \leq 1 < \frac{R_i^n + 2}{R_i^n + 1}
 \end{align}
 which implies 
 \begin{align}
 \lambda (\alpha + \beta)a_i^n  -\frac{1}{R_i^n + 1}  &\leq 1 - \lambda \gamma a_{i+1}^{n+1}  \\ 
 \lambda m_i-\frac{1}{R_i^n + 1}  &\leq 1 - \lambda \gamma( a_{i+1}^{n+1} - a_i^{n+1} ) 
 \end{align}
They could in principle be used in practice. It might be cumbersome, though, to solve them for $\rho_{i+\frac12}^{n+1}$, since $a_{i+1}^{n+1}$ and $a_i^{n+1}$ are involved which themselves depend on it.

It is then sufficient to impose 
\begin{align}
  \max\left(0, \frac{\lambda a_{i,\text{max}} -\frac{1}{R_i^n + 1}}{1 - \lambda \gamma (a_{i,\text{max}}-a_{i,\text{min}}) }   \right)   &\leq    \rho_{i+\frac12}^{n+1}  \leq 
    \min\left(1, \frac{ \lambda a_{i,\text{min}} + \frac{R_i^n}{R_i^n  + 1} }{1 + \lambda \gamma (a_{i,\text{max}}-a_{i,\text{min}}) }\right)
 \end{align}
because
\begin{align}
 \max\left(0, \frac{\lambda m_i-\frac{1}{R_i^n + 1}}{1 - \lambda \gamma( a_{i+1}^{n+1} - a_i^{n+1} )}\right) \leq \frac{\max\left(0, \lambda a_{i,\text{max}} -\frac{1}{R_i^n + 1}\right)}{1 - \lambda \gamma( a_{i+1}^{n+1} - a_i^{n+1} )} \leq \frac{\max\left(0, \lambda a_{i,\text{max}} -\frac{1}{R_i^n + 1}\right)}{1 - \lambda \gamma( a_{i,\text{max}}-a_{i,\text{min}})}
\end{align}
and analogously for the other bound, using that
\begin{align}
 1 - \lambda \gamma( a_{i+1}^{n+1} - a_i^{n+1} ) \leq 1 - \lambda \gamma( a_{i,\text{min}} -a_{i,\text{max}})
\end{align}

Since $a_{i,\text{max}}\geq a_{i,\text{min}} \geq 0$, one can also simply impose \eqref{eq:suffconditionrhonp1}.
 It is at this point that the slightly reduced CFL condition is required in order to guarantee the non-emptiness of the interval in \eqref{eq:suffconditionrhonp1}. Abbreviate $\lambda a_{i,\text{max}}  =: N$. First,
 \begin{align}
  \frac{N-\frac{1}{R_i^n + 1}}{1 - N \gamma} \leq 1
 \end{align}
is equivalent to
 \begin{align}
  N \leq \frac{R_i^n + 2}{(R_i^n + 1)(1 + \gamma)}
  \end{align}
  which is guaranteed by taking $N \leq \frac{1}{1 + \gamma} = \frac{6}{7} \simeq 0.857$. Second,
 \begin{align}
  \frac{N -\frac{1}{R_i^n + 1}}{1 -  \gamma N }   \leq    \frac{  \lambda a_{i,\text{min}} + \frac{R_i^n}{R_i^n  + 1} }{1 + \gamma N }
 \end{align}
is equivalent to
 \begin{align}
   0 \leq   \lambda a_{i,\text{min}} +1 -N - \gamma N \left( \lambda a_{i,\text{min}}+ \frac{R_i^n-1}{R_i^n  + 1} + N \right)
 \end{align}
Now, since $\lambda a_{i,\text{min}} + \frac{R_i^n-1}{R_i^n  + 1} + N \leq 2N +1$,
\begin{align}
 \lambda a_{i,\text{min}} +1 -N - \gamma N \left( \lambda a_{i,\text{min}} +\frac{R_i^n-1}{R_i^n  + 1} + N \right) &\geq 
 1 - N(1 + \gamma) - 2\gamma  N^2 \geq 0
\end{align}
by assumption.

\end{proof}

Observe again that the restriction in the CFL condition arises only because the actual wave speeds have been replaced by estimates, and that the classical condition $\lambda a_{i,\text{max}} \leq 1$ could be retained at the expense of having to deal with a more complicated inequality \eqref{eq:suffcondineq1}--\eqref{eq:suffcondineq2}. It is possible that the CFL estimate can be improved by refining the sufficient conditions.

 For reference, the corresponding inequalities for uniformly negative speeds are 
 \begin{align}
 m_{i+1} +\frac{R_{i+1}^n}{\lambda(R_{i+1}^n  + 1)} - \rho_{i+\frac12}^{n+1} \gamma a_{i+1}^{n+1}  &\geq   \beta b_{i+\frac12}  \rho_{i+\frac12}^{n+1} \\
   -\left( \frac{1}{\lambda}+\gamma a_i^{n+1}\right)\rho_{i+\frac12}^{n+1} &\leq  \beta b_{i+\frac12}\rho_{i+\frac12}^{n+1}   \\
   m_{i+1} - \frac{1}{\lambda(R_{i+1}^n + 1)} +\frac{ \rho_{i+\frac12}^{n+1}}{\lambda } -\gamma a_{i+1}^{n+1}\rho_{i+\frac12}^{n+1} &\leq    
  \beta b_{i+\frac12}\rho_{i+\frac12}^{n+1}       \\
  - \gamma a_i^{n+1} \rho_{i+\frac12}^{n+1} &\geq  \beta b_{i+\frac12} \rho_{i+\frac12}^{n+1}
 \end{align}
 i.e. {\footnotesize
 \begin{align}
   &\max\left(
   -\left( \frac{1}{\lambda}+\gamma a_i^{n+1}\right)\rho_{i+\frac12}^{n+1},   
   m_{i+1} - \frac{1}{\lambda(R_{i+1}^n + 1)} +\frac{ \rho_{i+\frac12}^{n+1}}{\lambda } -\gamma a_{i+1}^{n+1}\rho_{i+\frac12}^{n+1}    
   \right ) \\ \nonumber &\qquad \qquad \qquad  \leq \beta b_{i+\frac12} \rho_{i+\frac12}^{n+1} \leq  \\ \nonumber & \qquad \qquad \qquad\qquad \qquad \qquad \min\left(
   - \gamma a_i^{n+1} \rho_{i+\frac12}^{n+1},   
   m_{i+1} +\frac{R_{i+1}^n}{\lambda(R_{i+1}^n  + 1)} - \rho_{i+\frac12}^{n+1} \gamma a_{i+1}^{n+1}  
   \right)
 \end{align}
 }

 Again, the interval is non-empty upon the following choice of $\rho_{i+\frac12}^{n+1}$:
 \begin{align}
   \rho_{i+\frac12}^{n+1} &\geq0 & \rho_{i+\frac12}^{n+1}  & \leq   1
  \end{align}

 \begin{align}
   -\lambda m_{i+1} -\frac{R_{i+1}^n}{R_{i+1}^n  + 1}   \leq  \rho_{i+\frac12}^{n+1}\left( 1 -\gamma \lambda( a_{i+1}^{n+1} - a_i^{n+1}) \right) & \leq - \lambda m_{i+1} +\frac{1}{R_{i+1}^n + 1}  
 \end{align}

 As before, it is sufficient to choose $\rho_{i+\frac12}^{n+1}$ such that 
 \begin{align}
  \max \left(0, -\frac{\lambda a_{i,\text{min}} +\frac{R_{i+1}^n}{R_{i+1}^n  + 1} }{1 +\gamma \lambda a_{i,\text{min}}}\right)  \leq  \rho_{i+\frac12}^{n+1} & \leq \min\left(1, \frac{- \lambda a_{i,\text{max}} +\frac{1}{R_{i+1}^n + 1}  }{1 -\gamma \lambda a_{i,\text{min}}}\right)
 \end{align}

 The algorithm of the TVD limiting of Active Flux can now be summarized as follows:
 \begin{enumerate}
  \item Compute all new point values $\rho_{i+\frac12}^{n+1}$ according to \eqref{eq:updatepoints} using a monotonized reconstruction.
  \item Compute all half-time-step flux evaluations $f_{i+\frac12}^{n+\frac12}$ as $f(\rho_{i+\frac12}^{n+\frac12})$ according to \eqref{eq:updatepoints} analogously.
  \item Check condition \eqref{eq:suffconditionrhonp1} and modify $\rho_{i+\frac12}^{n+1}$ if necessary.
  \item Compute $q_{i+\frac12}^{n+1}$ from $\rho_{i+\frac12}^{n+1}$ using \eqref{eq:defofrho}. This is the new point value.
  \item Compute $b_{i+\frac12}$ from \eqref{eq:auxflux}.
  \item Verify that \eqref{eq:allconditionstogether} is fulfilled, and otherwise modify $b_{i+\frac12}$ and hence $f_{i+\frac12}^{n+\frac12}$.
  \item Compute the intercell flux $\hat f_{i+\frac12}$ from \eqref{eq:intercellfluxquadrature}, and compute the new cell average $\bar q_i^{n+1}$ using \eqref{eq:updateaverage}.
 \end{enumerate}

 Limiting is applied only if $R_i^n > 0$, because otherwise the cell contains a discrete extremum. At a such, there are competing interests between continuity and monotonicity. On the one hand, for a TVD method one needs to cap the extremum, i.e. to define $q^n_{\text{recon},i} = \bar q_i^n$ in such a cell and to take (e.g. for positive speeds)
 \begin{align}
  q_{i+\frac12}^{n+1} &= \bar q_i^n & \hat f_{i+\frac12} &= f(\bar q_i^n)
 \end{align}
 Observe that continuity of the reconstruction is lost. On the other hand, one might content oneself with a method that is TVD away from local extrema and simply let standard Active Flux evolve a cell containing a discrete extremum. The latter approach is used in the numerical examples below.
 
 The remainder of this Section goes back to the examples of Section \ref{sec:examples} and shows that the prescription \eqref{eq:ineqialitylinear} indeed gives the correct fluxes in these cases.

\begin{example}
 Assume $R_i^n$ large and thus $\rho_i^{n+1} = 1$. Then, 
\begin{align*}
   &\max\left(0, 1-\frac{1}{\cfl(R_i^n+1)} \right) = 1-\frac{1}{\cfl(R_i^n+1)} \leq s_{i+\frac12}    \\\nonumber &\qquad \qquad \qquad \leq 1- \frac{1}{\cfl(R_i^n+1)} = \min \left(\frac{1 }{\cfl},1- \frac{1}{\cfl(R_i^n+1)}\right)
 \end{align*}
 and thus
 \begin{align}
 \hat f_{i+\frac12}/c &= q_{i+\frac12}^n -  ( q_{i+\frac12}^n  -  q_{i-\frac12}^n)s_{i+\frac12} = q_{i+\frac12}^n -  ( q_{i+\frac12}^n  -  q_{i-\frac12}^n) \left( 1- \frac{\bar q_i^n - q_{i-\frac12}^n}{\cfl( q_{i+\frac12}^n  - q_{i-\frac12}^n)}\right)\\
 &=   q_{i-\frac12}^n + \frac{\bar q_i^n - q_{i-\frac12}^n}{\cfl}\\
 \intertext{or, in the notation of the example of Section \ref{ssec:examplesrightmovjump}}
 &=  q_{i-\frac12}^n + \frac{\epsilon}{\cfl}
\end{align}
which agrees with the result obtained there.
\end{example}

 \begin{example}
  Consider again linear advection \eqref{eq:linadv} with $c > 0$. If $\cfl = 1$, one finds from \eqref{eq:defofrho} and \eqref{eq:deltaqdef} $\rho_i^{n+1} = 1$ and
 \begin{align}
   \max \left(0, 1-\frac{1}{R_i^n+1}\right) \leq  s_{i+\frac12}  \leq \min\left( 1, \frac{R_i^n}{R_i^n+1} \right) 
 \end{align}
i.e.
\begin{align}
 \hat f_{i+\frac12}/c 
   &= q_{i+\frac12}^n -  ( q_{i+\frac12}^n  -  q_{i-\frac12}^n) \frac{R_i^n}{R_i^n + 1} = q_{i+\frac12}^n -  ( q_{i+\frac12}^n  -  q_{i-\frac12}^n) \frac{q_{i+\frac12}^n - \bar q_i^n}{q_{i+\frac12}^n - q_{i-\frac12}^n} = \bar q_i^n
\end{align}
This is the correct result, as discussed in Section \ref{ssec:examplesnuone}.
 \end{example}

 \section{Numerical examples} \label{sec:numerical}
 
 In the numerical examples below, periodic boundary conditions for a domain $[0,1]$ are used throughout. The CFL number is computed using both the point values and the averages.
 
 When monotone initial data are converted into initial cell averages and point values, it is important not to introduce an artificial oscillation at this step already, if one uses a quadrature to compute the initial cell averages. Either one should then compute them exactly, or using a quadrature that is guaranteed to yield a monotone result. Below, the trapezoidal rule is used as a fallback if Simpson's rule does not produce monotone discrete data in some cell. As discussed above, the method (denoted here by TVD Active Flux) is TVD away from local extrema.
 
 The definition of $R_i^n$ and some other parameters requires divisions by differences between values of the degrees of freedom. The limiting is not applied if these differences become smaller than $10^{-10}$. The power laws are replaced by parabolae if the exponent is larger than 50 or smaller than $\frac1{50}$ to avoid machine precision issues.
 
 \subsection{Linear advection}

 For linear advection \eqref{eq:linadv} the speed is chosen as $c=1$ and a CFL condition of 0.9 is adopted.
 
 \subsubsection{Smooth and discontinuous profiles} \label{ssec:zalesak}
 
 This is a variant of the test from \cite{jiang96}, which combines different profiles of various degree of regularity. In Figure \ref{fig:adv-zalesak} results are shown on a grid of 200 cells at $t=1$, i.e. after the entire datum has traveled once around the domain. With only a monotonized reconstruction not all oscillations are removed (the solution tends to be better for $\cfl < 0.5$, though). The TVD Active Flux solution is entirely free of oscillations. At $t=1$, the values of the numerical solution are contained in the interval $[-2.1 \cdot 10^{-10}, 1 + 2.1\cdot 10^{-10}]$. The smooth maximum of the Gaussian is visibly reduced by both versions of the limiting, while the sharp maximum of the hat function suffers from smoothing already for the unlimited method. From the same Figure one can see that the flux limiting (i.e. the replacement of $f(q_{i+\frac12}^{n+\frac12})$ by $f_{i+\frac12}^{n+\frac12}$ defined in \eqref{eq:auxflux}) happens only at locations of irregularity of the solution, and occasionally close to maxima; smooth regions are entirely unaffected.

 \begin{figure}
   \centering
   \includegraphics[width=0.9\textwidth]{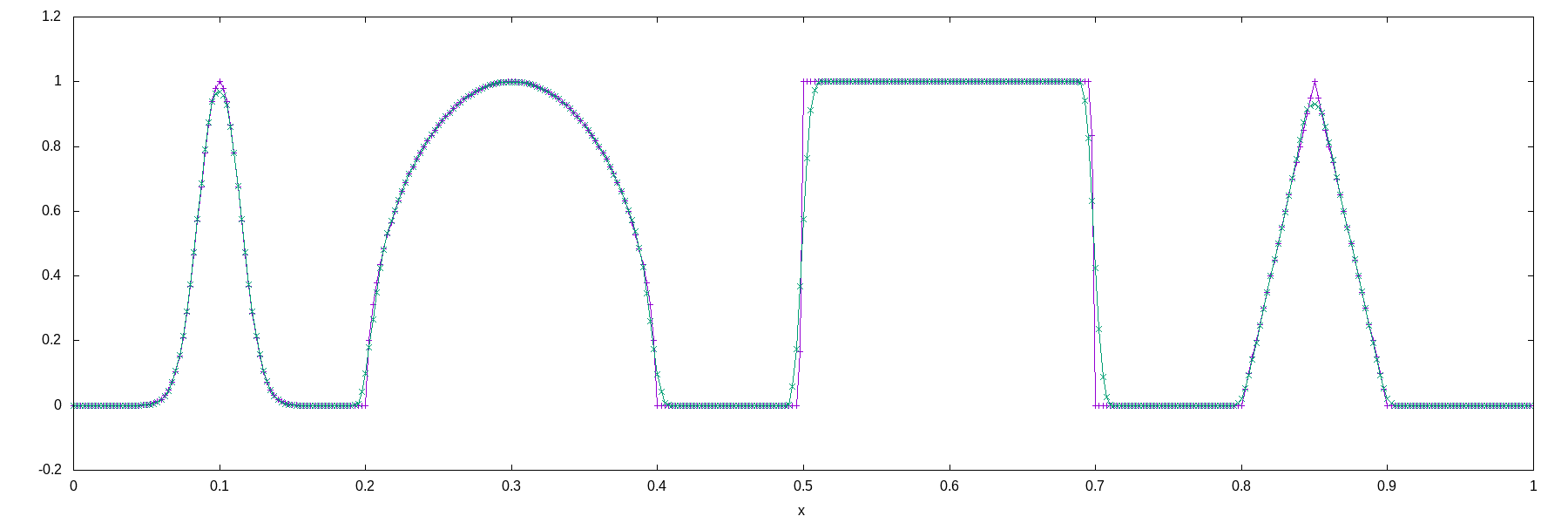} \\
   \includegraphics[width=0.9\textwidth]{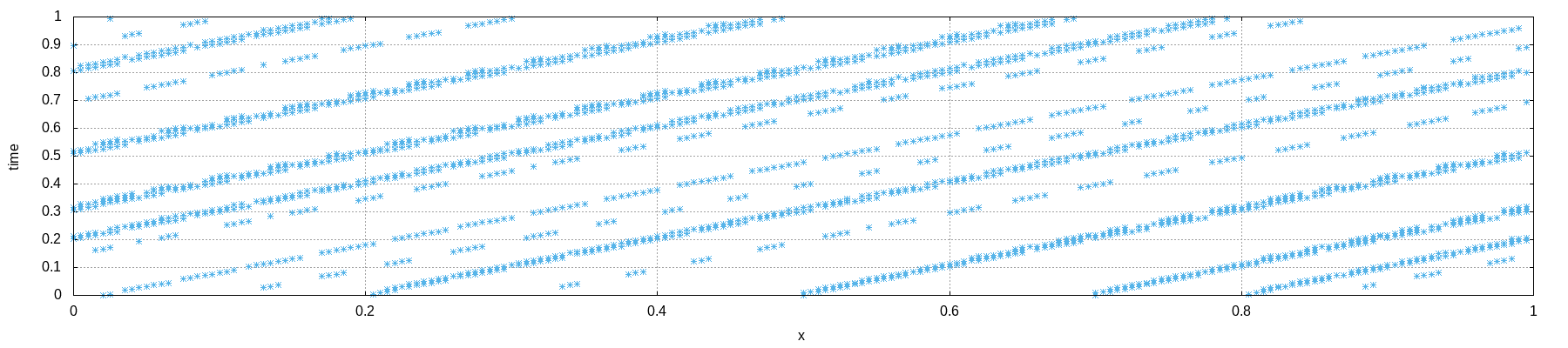}\\
   \includegraphics[width=0.9\textwidth]{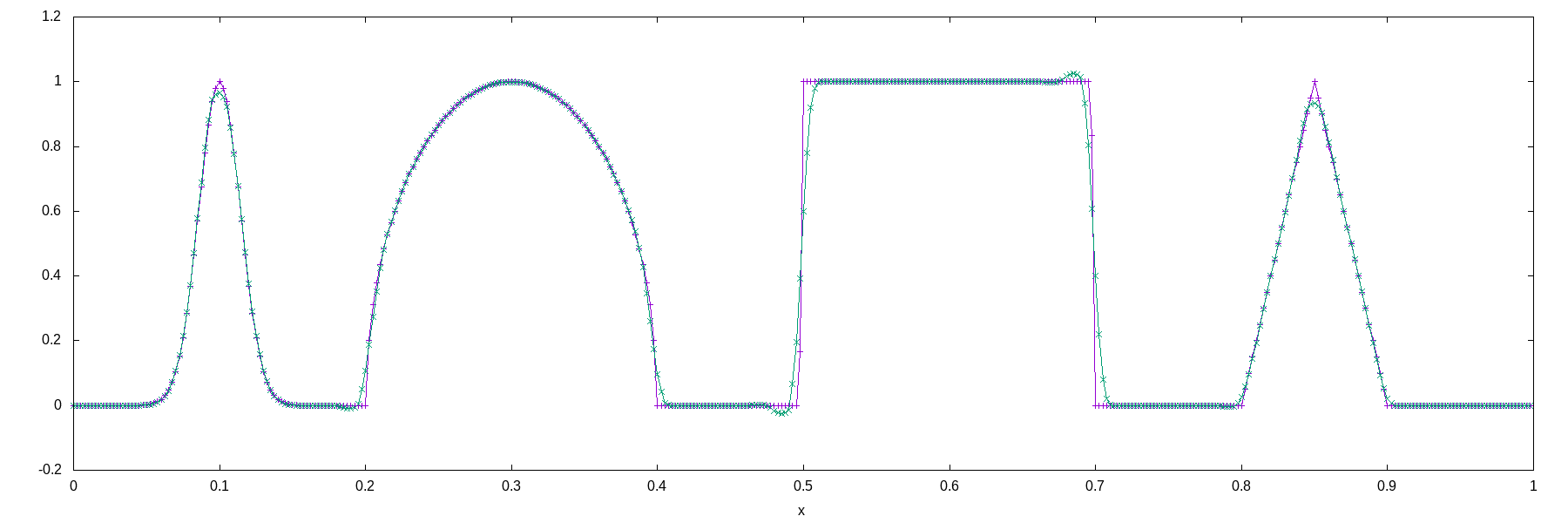} \\
   \includegraphics[width=0.9\textwidth]{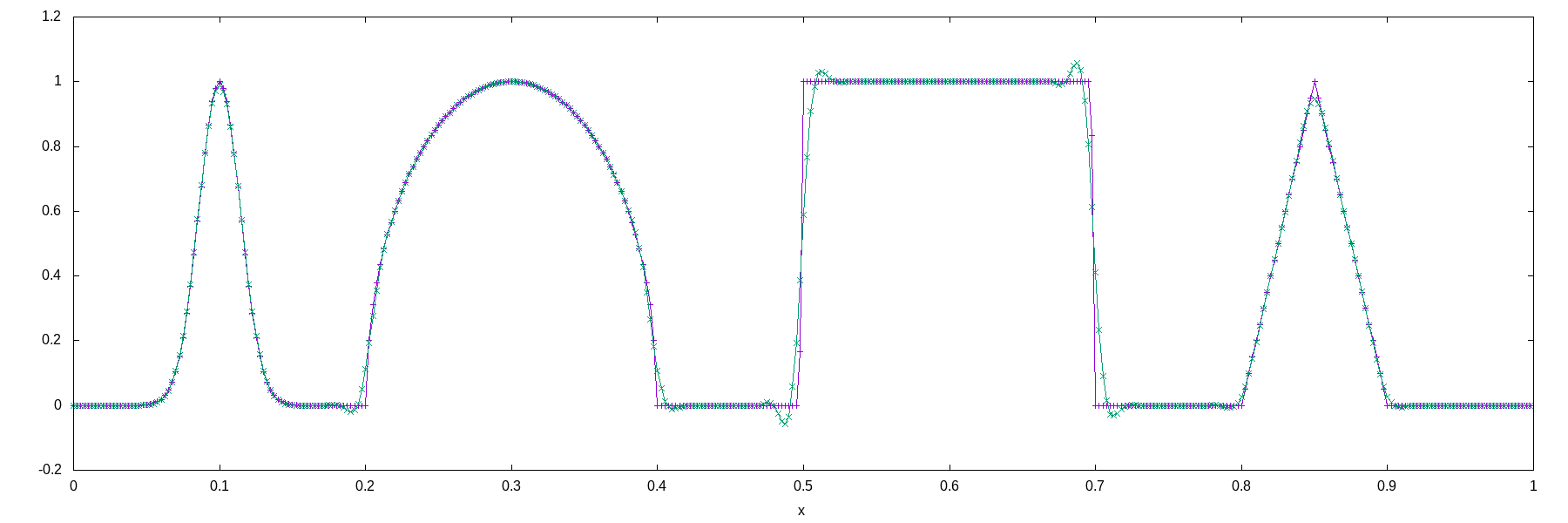}
   \caption{Linear advection with initial data containing smooth and discontinuous profiles (Section \ref{ssec:zalesak}). Initial data and results at time $t=1$ are shown, after the data have traveled once around the domain. \emph{From top to bottom}: Results obtained using the TVD Active Flux method; space-time locations of limiter activation, with crosses indicating modifications of the half-time-step flux $f_{i+\frac12}^{n+\frac12}$; results obtained using only a monotonized reconstruction (parabola/power law), but no flux limiting; results with no limiting applied at all.}
   \label{fig:adv-zalesak}   
  \end{figure}
 
 \subsubsection{Convergence study}
 
 Smooth solutions, away from local extrema, are not subject to modifications and retain third-order accuracy of unmodified Active Flux. The convergence studies of Figures \ref{fig:adv-gaussian-conv} and \ref{fig:adv-step-conv} focus therefore on the influence of the limiting on extrema and on non-smooth solutions, respectively. Figure \ref{fig:adv-gaussian-conv} shows the $L^1$ error at time $t=0.1$ for Gaussian initial data $q_0(x) = \exp\left(-\frac{(x-x_\text{c})^2}{w^2}\right)$ with $w = 0.1$, $x_\text{c} = 0.5$. The TVD Active Flux still has a convergence order of about 2.4. Figure \ref{fig:adv-step-conv} shows the same for step initial data. Here one observes that both the TVD Active Flux and the standard unlimited Active Flux converge with a rate of about $\frac34$.

 \begin{figure}
  \centering
  \includegraphics[width=0.45\textwidth]{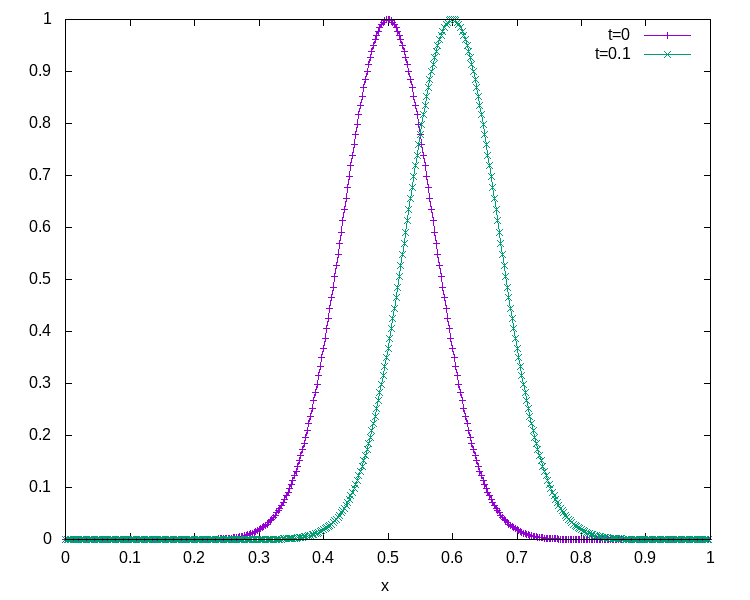} \hfill \includegraphics[width=0.45\textwidth]{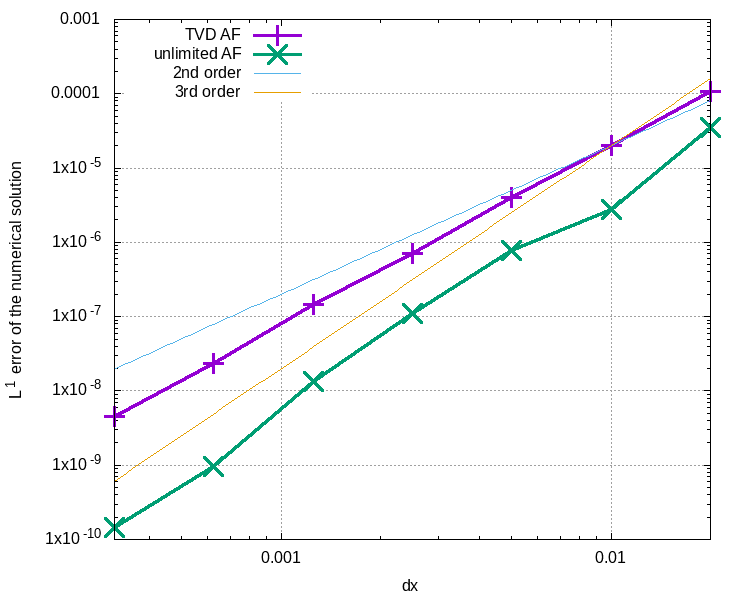}
  \caption{Convergence analysis for linear advection with Gaussian initial data on a mesh of 200 cells. \emph{Left}: Initial data and solution at time $t=0.1$ obtained using TVD Active Flux. The values at time $t=0.1$ are contained in the interval $ [1.7\cdot 10^{-11}, 0.99965]$. \emph{Right}: $L^1$ error of the numerical solution (point values) at $t=0.1$ as a function of $\Delta x$. One observes a convergence rate of about 2.4 for the TVD Active Flux, and order 3 for the unlimited Active Flux.}
  \label{fig:adv-gaussian-conv}
 \end{figure}

 \begin{figure}
  \centering
  \includegraphics[width=0.45\textwidth]{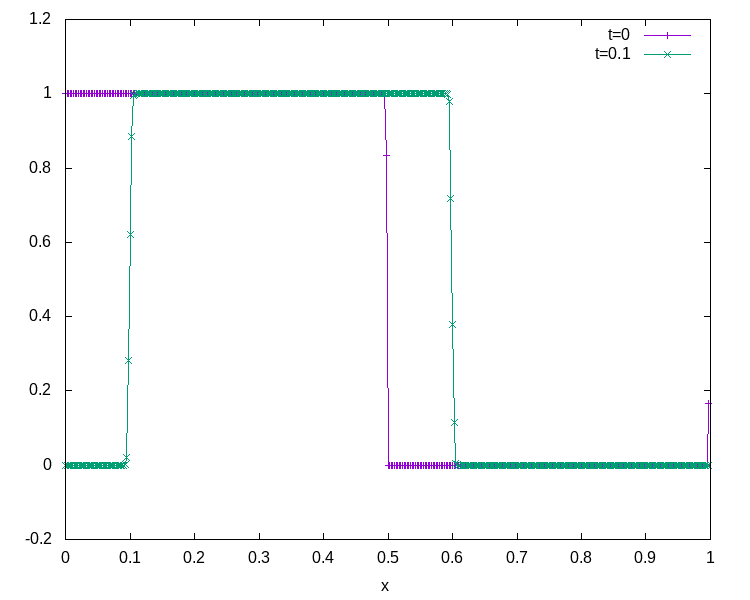} \hfill \includegraphics[width=0.45\textwidth]{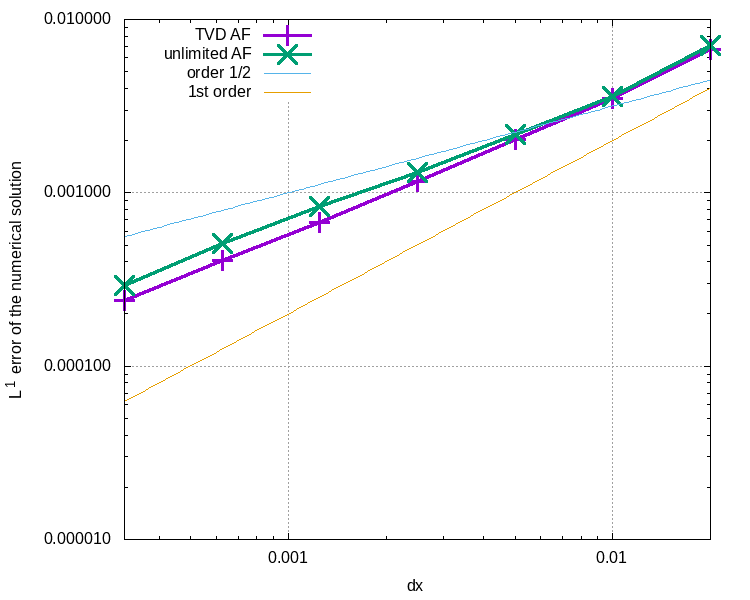}
  \caption{Convergence analysis for linear advection with step initial data on a mesh of 200 cells. \emph{Left}: Solution at times $t=0$ and $t=0.1$ obtained using TVD Active Flux. The values of the numerical solution are contained in the interval $[-2.6 \cdot 10^{-11}, 1 + 2.6 \cdot 10^{-11}]$. \emph{Right}: $L^1$ error of the numerical solution (point values) at $t=0.1$ as a function of $\Delta x$, computed in the inner 60\% of the domain in order to focus entirely on the single step. One observes for both the unlimited and the TVD Active Flux methods a convergence rate of about 0.75.}
  \label{fig:adv-step-conv}
 \end{figure}

 \subsection{Burgers' equation}
 For Burgers' equation, a CFL condition of 0.7 is adopted in accordance with Theorem \ref{thm:sufficient}, and the sufficient condition \eqref{eq:suffconditionrhonp1} is used throughout.

 \subsubsection{Hat data}
 
 Figure \ref{fig:burgers-linear} shows the evolution of hat data
 \begin{align}
   0.1 + \max\left(0, 1.1 - \left|\frac{x}{w}\right|\right)
 \end{align}
 on a grid of 200 cells with $w = 0.25$. The shock forms at $t=0.25$; one finds that at $t = 0.1$ the values of the solution are contained in $[0.1   - 7\cdot 10^{-14}, 1.0902]$, and in $[0.1 - 2 \cdot 10^{-13}, 1.0884]$ at time $t=0.2$. One observes the activation of the limiter on the shock path and close to the maximum of the hat. At early times, it also is activated close to the right corner of the hat. The solution obtained with TVD Active Flux is free of oscillations, as expected. The solution without flux-limiting (shown in the lower half of Figure \ref{fig:burgers-linear}) demonstrates the spurious oscillations that otherwise occur on the shock, and which the monotonization of the reconstruction cannot prevent. It occurs very rarely that $\rho_{i+\frac12}^{n+1}$ needs to be modified according to \eqref{eq:suffconditionrhonp1}.

\begin{figure}
   \centering
   \includegraphics[width=0.45\textwidth]{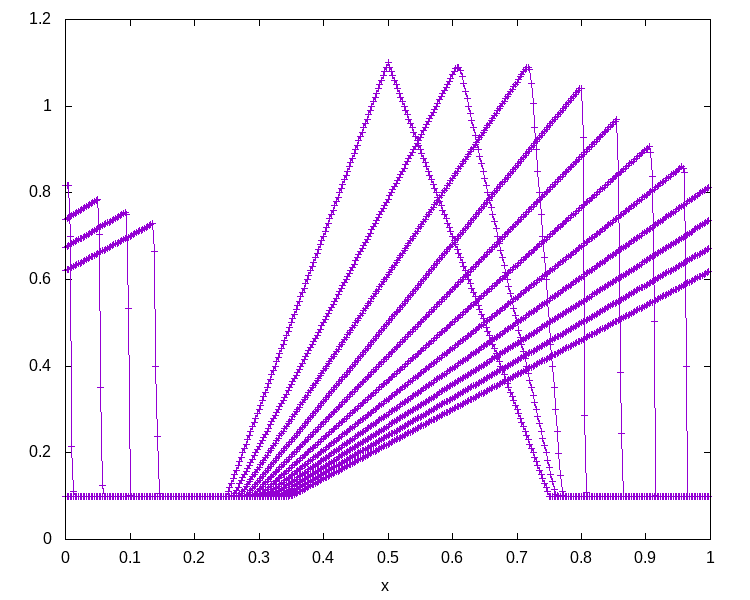} \hfill \includegraphics[width=0.45\textwidth]{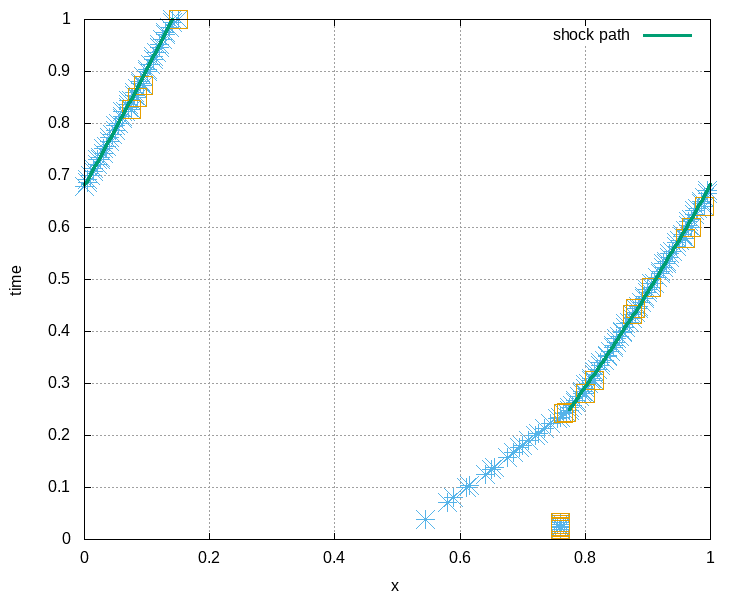}\\
   \includegraphics[width=0.45\textwidth]{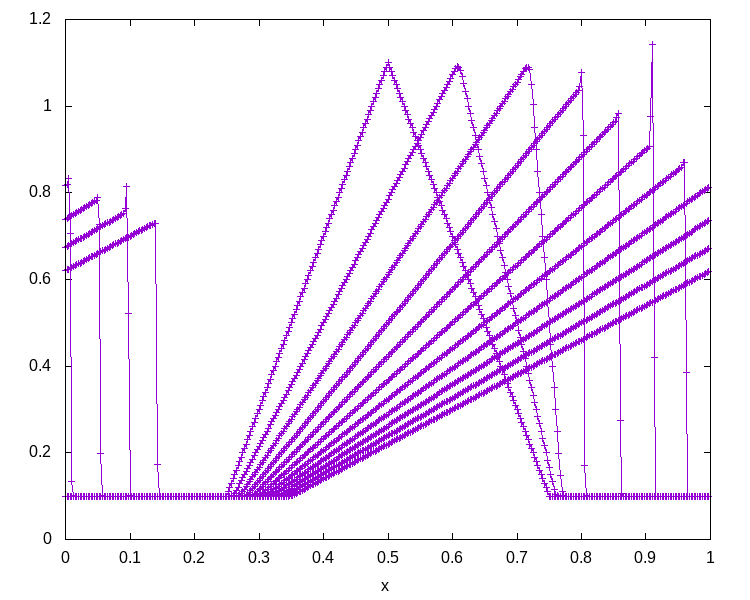} \hfill \includegraphics[width=0.45\textwidth]{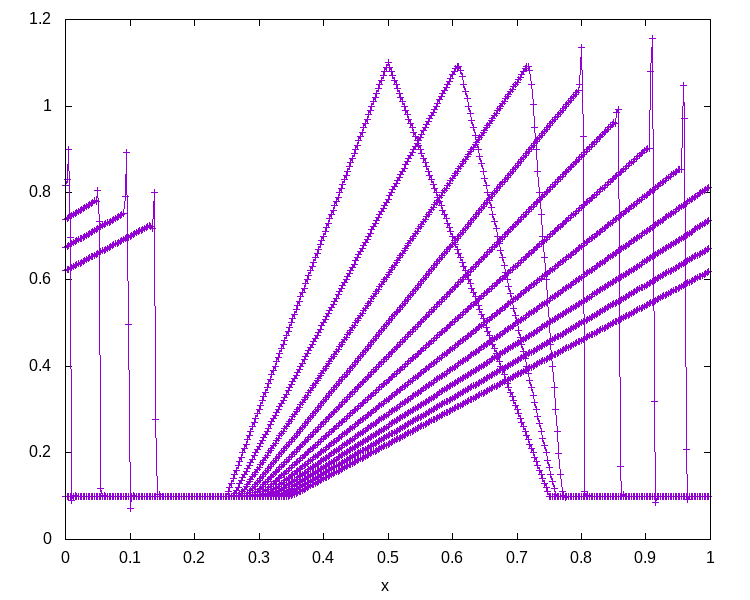}
   \caption{Burgers' equation with hat initial data. \emph{Top left}: Results obtained using the TVD Active Flux method, at times $t = 0, 0.1, 0.2, \dots$. \emph{Top right}: Space time locations of limiting activation: crosses indicate modifications of the half-time-step flux $f_{i+\frac12}^{n+\frac12}$, boxes indicate modifications of $\rho_{i+\frac12}^{n+1}$. The line indicates the analytical location of the shock after its formation. \emph{Bottom left}: Results obtained using only power-law reconstruction, but no flux limiting. \emph{Bottom right}: No limiting applied at all.}
   \label{fig:burgers-linear}   
  \end{figure}

  \begin{figure}
   \centering
   \includegraphics[width=0.45\textwidth]{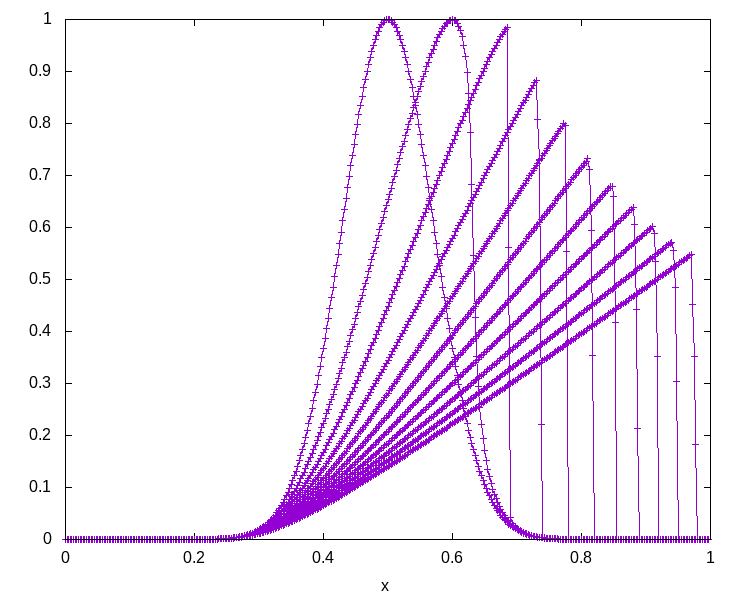} \hfill \includegraphics[width=0.45\textwidth]{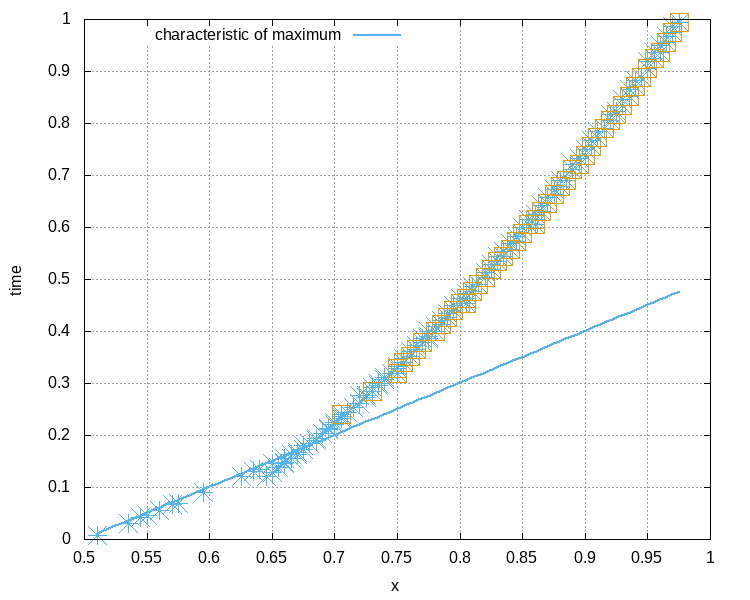}\\
   \includegraphics[width=0.45\textwidth]{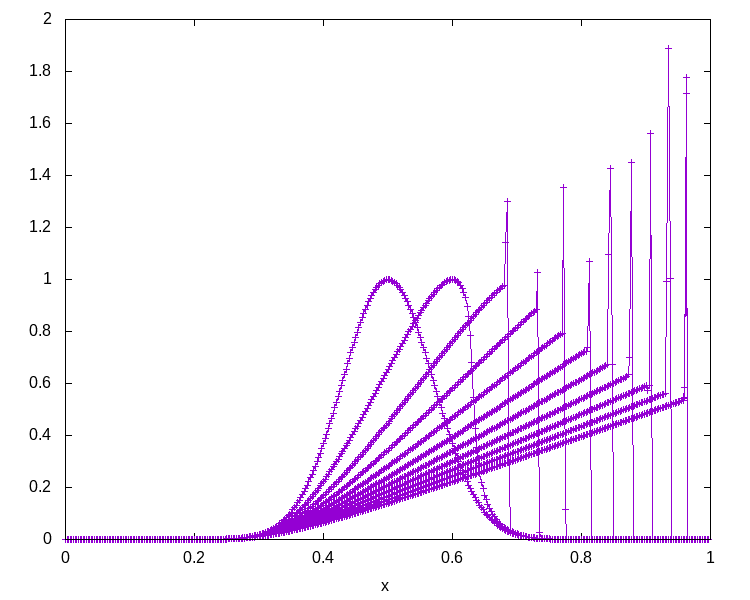} \hfill \includegraphics[width=0.45\textwidth]{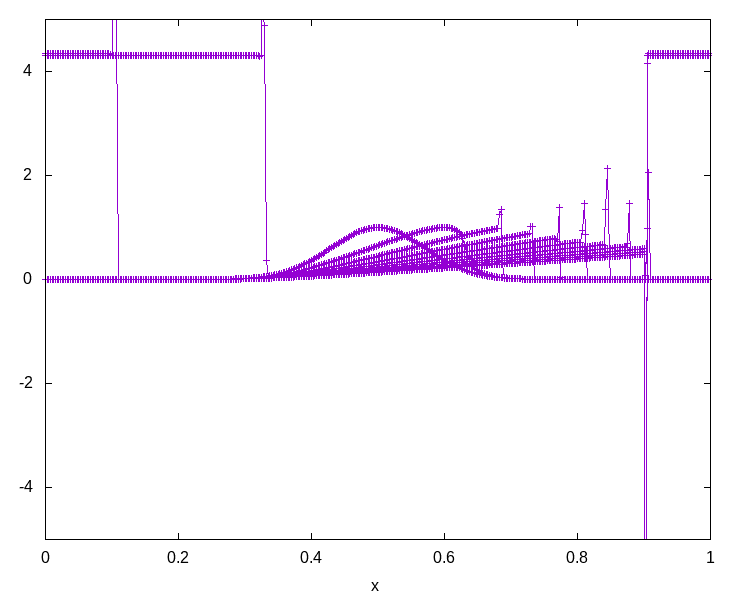}
   \caption{Burgers' equation with Gaussian initial data. \emph{Top left}: Results obtained using the TVD Active Flux method, at times $t = 0, 0.1, 0.2, \dots$. \emph{Top right}: Space time locations of limiting activation: crosses indicate modifications of the half-time-step flux $f_{i+\frac12}^{n+\frac12}$, boxes indicate modifications of $\rho_{i+\frac12}^{n+1}$. The straight line indicates the characteristic with speed 1 emanating from the grid midpoint, which the maximum of the data follows until it meets the shock. The limiter is not activated anywhere in the region not shown. \emph{Bottom left}: Results obtained using only power-law reconstruction, but no flux limiting. \emph{Bottom right}: No limiting applied at all, the solution grows in an entirely unreasonable way (oscillations outside axis range).}
   \label{fig:burgers-gaussian}   
  \end{figure}

 \subsubsection{Gaussian initial data}

  Figure \ref{fig:burgers-gaussian} shows a test case very similar to the preceding one, with the initial data being smooth. Gaussian initial data $q_0(x) = \exp\left(-\frac{(x-x_\text{c})^2}{w^2}\right)$ with $w = 0.1$, $x_\text{c} = 0.5$ self-steepen and develop a shock. A grid of 200 cells is used. Simulation results without any limiting demonstrate how the method's violent oscillations (outside the axis range shown) lead to an entirely unrecognizable solution. This is prevented by monotonized reconstruction, but the oscillation at the shock remains. It is finally removed by TVD Active Flux. The shocks forms only after $t=0.1$ and until then, the maximum is purely transported. When using the TVD Active Flux, the global maximum of the discrete data is 0.9994648896207335 at $t=0.1$. The activation of the limiter on the maximum of the Gaussian is visible in Figure \ref{fig:burgers-gaussian}, top right, too, but the frequency of its activation is significantly less than on the shock.

  \section{Conclusions and outlook}
  
  The concept of a TVD method is restricted to scalar conservation laws and/or one spatial dimension, but the theory associated with TVD methods is nevertheless of fundamental importance in numerical analysis.
  This paper presents the first TVD Active Flux method, and it does not give up continuity. Improved (monotonized) reconstructions developed a while ago are complemented here by a modified computation of the numerical flux required for the update of the cell average. This modification is based on a version of Harten's lemma that takes into account all the degrees of freedom of Active Flux. For linear advection one obtains a very simple prescription how the intercell flux needs to be chosen to guarantee the TVD property. For nonlinear scalar conservation laws one might need to first modify the point value at the new time $\rho_{i+\frac12}^{n+1}$ before being able to perform the flux modification. Even then, the limiting remains simple to implement. 
 
  Some aspects of the presented approach to TVD might be improved in future. This concerns relaxing the sufficient conditions \eqref{eq:suffconditionrhonp1} (and the CFL condition they entail), possibly exploring in more detail the regime $-1 \leq R_i^n < 0$ which seems to have some relevance for the conditions \eqref{eq:posspeeddirectinequalities1}--\eqref{eq:posspeeddirectinequalities4}, and an even better way to preserve the order of accuracy on local extrema. This also concerns transonic discontinuities.
   It is possible that similar ideas can be applied to the semi-discrete Active Flux (as introduced in \cite{abgrall20,abgrall22}) in conjunction with SSP time stepping, and that they might be useful (even if ad-hoc) for nonlinear systems such as the Euler equations. One might even hope that the TVD concept might be extended along similar lines to general continuous Finite Element approximations.
 
\vspace{1cm}

The ideas of the paper are entirely the author's and the paper has been written without AI support.
 
 \newpage
 
\bibliographystyle{alpha}

\end{document}